%BeginFileInfo
%%Publisher=ARXIV
%%Project=AOP
%%Manuscript=AOP708
%%Stage=
%%TID=sandra.matulyte
%%Format=latex
%%Distribution=arXiv
%%Destination=PDF
%%DVI.Maker=arXiv_tex_dvi
%%PDF.Maker=arXiv_tex_pdf
%EndFileInfo
%
% Institute of Mathematical Statistics (IMI)
% Journal "The Annals of Probabability"

%secthm,seceqn,sefcloat,nameyear,number,noautosecdot
\documentclass[aop,MSNbibl,seceqn,dvips]{arximspdf}
\usepackage{mathbh}

% settings
%

% article settings
\doi{10.1214/11-AOP708}
\volume{41}
\issue{3A}
\pubyear{2013}
\firstpage{1191}
\lastpage{1217}

\makeatletter
\newtheorem{theorem}{Theorem}
\newtheorem{corollary}{Corollary}
\newtheorem{proposition}{Proposition}
\newtheorem{lemma}{Lemma}
\newproclaim{example}{Example}
\newcommand{\p}{\mathbb{P}}
\newcommand{\e}{\mathbb{E}}
\newcommand{\ind}{\mathbh{1}}
\newcommand{\ed}{\stackrel{(d)}{=}}
\newcommand{\eqdef}{\stackrel{\mathrm{def}}{=}}
\makeatother

\begin{document}
\begin{frontmatter}

\title{On the law of the supremum of L\'evy processes\thanksref{T2}}
\runtitle{Supremum of L\'evy processes}
\thankstext{T2}{Supported by the ECOS-CONACYT CNRS Research Project M07-M01.}

\begin{aug}
\author[A]{\fnms{L.} \snm{Chaumont}\corref{}\thanksref{t1}\ead[label=e1]{loic.chaumont@univ-angers.fr}}
\thankstext{t1}{Ce travail a b\'en\'efici\'e d'une aide de l'Agence
Nationale de la Recherche
portant la r\'ef\'erence ANR-09-BLAN-0084-01.}
\runauthor{L. Chaumont}
\affiliation{Universit\'e d'Angers}
\address[A]{LAREMA---UMR CNRS 6093\\
Facult\'{e} des Sciences\\
Universit\'e d'Angers\\
2, Bd Lavoisier 49045\\
Angers Cedex 01\\
France} %adresu isvedimo komanda gale!
\end{aug}

% HISTORY:
\received{\smonth{11} \syear{2010}}
\revised{\smonth{8} \syear{2011}}

% ABSTRACT
%
\begin{abstract}
We show that the law of the overall supremum $\overline{X}_t=\sup
_{s\le
t}X_s$ of a L\'evy process $X$, before the
deterministic time $t$ is equivalent to the average occupation measure
$\mu_t^+(dx)=\int_0^t\p(X_s\in dx) \,ds$, whenever
0 is regular for both open halflines $(-\infty,0)$ and $(0,\infty)$. In
this case, $\p(\overline{X}_t\in dx)$ is absolutely continuous
for some (and hence for all) $t>0$ if and only if the resolvent measure
of $X$ is absolutely continuous. We also study the cases
where 0 is not regular for both halflines. Then we give absolute
continuity criterions for
the laws of $(g_t,\overline{X}_t)$ and $(g_t,\overline{X}_t,X_t)$,
where $g_t$ is the time at which
the supremum occurs before $t$. The proofs of these results use an
expression of the~joint law
$\p(g_t\in ds,X_t\in dx,\overline{X}_t\in dy)$ in terms of the entrance
law of
the excursion measure of the reflected process at the supremum and that
of the reflected process at the infimum.
As an application, this law is made (partly) explicit in some
particular instances.
\end{abstract}

% KEYWORDS
%
\begin{keyword}[class=AMS]
\kwd{60G51}.
\end{keyword}
\begin{keyword}
\kwd{Past supremum}
\kwd{equivalent measures}
\kwd{absolute continuity}
\kwd{average occupation measure}
\kwd{reflected process}
\kwd{excursion measure}.
\end{keyword}

\end{frontmatter}
%

%s1 ###
\section{Introduction}\label{sec1}

The law of the past supremum $\overline{X}_t=\sup_{s\le t}X_s$ of L\'
evy processes before a deterministic time $t>0$
presents some major interest in stochastic modeling, such as queuing
and risk theories, as it is related to the law of
the first passage time $T_x$ above any positive level $x$, through the
relation $\p(\overline{X}_t\ge x)=\p(T_x\le t)$. The
importance of knowing features of this law, for some domains of
application, mainly explains the abundance of the literature on
this topic. From the works of L\'evy
on Brownian motion~\cite{le} to the recent developments of Kuznetsov~\cite{ku2} for a very large class of
stable L\'evy processes, an important number of papers have appeared.
Most of them concern explicit computations for
stable processes and basic features, such as tail behavior of this law,
are still unknown in the general case.

The present work is mainly concerned with the study of the nature of
the law of the overall supremum $\overline{X}_t$
and, more specifically, with the existence of a density for this
distribution. In a recent paper,
Bouleau and Denis~\cite{bd} proved that the law of $\overline{X}_t$ is
absolutely continuous
whenever the L\'evy measure of $X$ is itself absolutely continuous and
satisfies some additional conditions; see Proposition 3
in~\cite{bd}. This result has raised our interest on the subject, and
we propose to determine ``exploitable'' necessary
and sufficient conditions, under which the law of $\overline{X}_t$ is
absolutely continuous. Doing so, we also obtained conditions
for the absolute continuity of the random vectors $(g_t,\overline
{X}_t)$ and $(g_t,\overline{X}_t,X_t)$,
where $g_t$ is the time at which the maximum of $X$ occurs on $[0,t]$.
The proofs are based on two main ingredients. The first
one is the equivalence between the law of $X_t$ in $\mathbb{R}_+$ and
the entrance law of the excursions of the reflected process
at its minimum; see Lemma~\ref{equivalence1}. The second argument is an
expression of the law of $(g_t,\overline{X}_t,X_t)$ in
terms of the entrance laws $q_t$ and $q_t^*$ of the excursions of both
reflected processes, at the maximum and at the minimum,
respectively: if 0 is regular for both half lines $(-\infty,0)$ and
$(0,\infty)$, then
\[
\p(g_t\in ds,\overline{X}_t\in dx,\overline{X}_t-X_t\in
dy)=q_s^*(dx)q_{t-s}(dy)\ind_{[0,t]}(s) \,ds.
\]
This expression is extended to the nonregular cases in Theorem \ref
{law}. As another application, we may recover
the law of the triplet $(g_t,\overline{X}_t,X_t)$ for Brownian motion
with drift and derive an explicit form of this law, for the symmetric
Cauchy process. The law of $(g_t,\overline{X}_t)$, may also be computed
in some instances of spectrally negative L\'evy processes.

The remainder of this paper is organized as follows. In Section \ref
{prelim}, we give some definitions and we recall some basic
elements of excursion theory and fluctuation theory for L\'evy
processes, which are necessary for the proofs. The main results
of the paper are stated in Sections~\ref{main} and~\ref{expressions}.
In Section~\ref{main}, we state continuity properties
of the triple $(g_t,\overline{X}_t,X_t)$, whereas Section \ref
{expressions} is devoted
to some representations and explicit expressions for the law of this triple.
Then the proofs of the results are postponed to Section~\ref{proofs}.

%s2 ###
\section{Preliminaries}\label{prelim}

We denote by $\mathcal{D}$ the space of c\`{a}dl\`{a}g paths $\omega:[0,\infty)
\rightarrow\mathbb{R\cup\{\infty\}}$ with lifetime $\zeta
(\omega)=\inf\{t\ge0:\omega_{t}=\omega_s, \forall s\ge t\}$, with
the usual convention that $\inf\{\varnothing\}=+\infty$.
The space $\mathcal{D}$ is equipped with the Skorokhod topology, its
Borel $\sigma$-algebra $\mathcal{F}$ and the usual completed
filtration \mbox{$(\mathcal{F}_{s},s\geq0)$}, generated by the
coordinate process $X=(X_{t},t\geq0)$ on
the space~$\mathcal{D}$. We write $\overline{X}$ and $\underline{X}$
for the
supremum and infimum processes,
\[
\overline{X}_{t}=\sup\{X_{s}:0\leq s\leq t\}\quad \mbox{and}\quad
\underline{X}_{\hspace*{1pt}t}=\inf\{X_{s}:0\leq s\leq t\}.
\]
For $t>0$, the last passage times by $X$ at its supremum and at its
infimum before $t$ are, respectively, defined by
\begin{eqnarray*}
g_t&=&\sup\{s\le t:X_s=\overline{X}_t \mbox{ or } X_{s-}=\overline
{X}_t\}
\quad\mbox{and}\\
 g_t^*&=&\sup\{s\le t:X_s=\underline{X}_{\hspace*{1pt}t} \mbox{ or }
X_{s-}=\underline{X}_{\hspace*{1pt}t}\}.
\end{eqnarray*}
We also define the first passage time by $X$ in the open halfline
$(0,\infty)$ by
\[
\tau_0^+=\inf\{t\ge0:X_t>0\}.
\]
We denote by $\mathbb{P}$ the law on $\mathcal{D}$ of a L\'{e}vy
process starting from $0$. When $(X,\p)$ or $(-X,\p)$ is a subordinator,
the past supremum at time $t$ corresponds to the value $X_t$ or 0,
respectively. So these cases will be excluded in the sequel.
Besides, the technics which are used in this paper are not quite
adapted to the case of compound Poisson processes which will
be treated apart, in Theorem~\ref{new}. So unless explicitly mentioned,
in the sequel, we assume that $X$ is not a compound
Poisson process.

Note that under our assumptions, 0 is always regular for $(-\infty,0)$
or/and $(0,\infty)$.
It is well known that the reflected processes $\overline{X}-X$ and
$X-\underline{X}$ are strong
Markov processes. Under $\mathbb{P}$, the state 0 is regular for
$(0,\infty)$ [resp., for $(-\infty,0)$]
if and only if it is regular for $\{0\}$, for the reflected process
$\overline{X}-X$ (resp., for $X-\underline{X}$).
If 0 is regular for $(0,\infty)$, then the local time at 0 of the
reflected process $\overline{X}-X$ is the unique
continuous, increasing, additive functional $L$ with $L_0=0$, a.s.,
such that the support of the measure $dL_t$ is
the set $\overline{\{t:\overline{X}_t=X_t\}}$ and which is normalized by
%
%
%e1 ###
\begin{equation}\label{norm1}
\e\biggl(\int_0^\infty e^{-t} \,dL_t\biggr)=1.
\end{equation}
Let $G$ be the set of the left endpoints of the excursions away from 0
of $\overline{X}-X$, and for each $s\in G$,
call $\varepsilon^s$ the excursion which starts at $s$. Denote by $E$ the
set of excursions, that is,
$E=\{\omega\in\mathcal{ D}:\omega_t>0, \mbox{for all $0<t<\zeta(\omega
)$}\}
$, and let $\mathcal{ E}$ be the Borel $\sigma$-algebra which
is the trace of $\mathcal{ F}$ on the subset $E$ of~$\mathcal{ D}$.
The It\^o measure $n$ of the excursions away from 0 of the process
$\overline{X}-X$ is characterized by the
so-called \textit{compensation formula},
%
%
%e2 ###
\begin{equation}\label{compensation}\e\biggl(\sum_{s\in G}F(s,\omega,\varepsilon^s)\biggr)=
\e\biggl(\int_0^\infty \,dL_s\biggl(\int F(s,\omega,\varepsilon)n(d\varepsilon
)\biggr)\biggr),
\end{equation}
which is valid whenever $F$ is a positive and predictable process, that
is, $\mathcal{ P}(\mathcal{ F}_s)\otimes\mathcal{ E}$-measurable,
where $\mathcal{ P}(\mathcal{ F}_s)$ is the predictable $\sigma$-algebra
associated to the filtration $(\mathcal{ F}_s)$.
We refer to~\cite{be}, Chapter~IV,~\cite{ky}, Chapter~6 and~\cite{do}
for more detailed definitions and some
constructions of $L$ and $n$.

If 0 is not regular for $(0,\infty)$, then the set $\{t:(\overline
{X}-X)_t=0\}$ is discrete, and following
\cite{be} and~\cite{ky}, we define the local time $L$ of $\overline
{X}-X$ at 0 by
%
%
%e3 ###
\begin{equation}\label{norm2}
L_t=\sum_{k=0}^{R_t}\mathbf{ e}^{(k)},
\end{equation}
where $R_t=\operatorname{Card}\{s\in(0,t]:\overline{X}_s=X_s\}$, and
$\mathbf{e}^{(k)}$, $k=0,1,\ldots$ is a sequence of independent and exponentially
distributed random variables with parameter
%
%
%e4 ###
\begin{equation}\label{alpha}
\gamma=\bigl(1-\e(e^{-\tau^+_0})\bigr)^{-1}.
\end{equation}
In this case, the measure $n$ of the excursions away from 0 is
proportional to
the distribution of the process $X$ under the law $\mathbb{P}$,
returned at its first passage
time in the positive halfline. More formally, let us define
$\varepsilon^{\tau_0^+}=(-X_s,0\le s<\tau_0^+)$,
then for any bounded Borel functional $K$ on $\mathcal{ E}$,
%
%
%e5 ###
\begin{equation}\label{excdisc}
\int_\mathcal{ E}K(\varepsilon)n(d\varepsilon)=\gamma\e[K(\varepsilon^{\tau
_0^+})].
\end{equation}
Define $G$ and $\varepsilon^s$ as in the regular case. Then from
definitions (\ref{norm2}), (\ref{excdisc}) and an application
of the strong Markov property, we may check that the normalization~(\ref
{norm1}) and the compensation formula (\ref{compensation})
are still valid in this case.

The local time at 0 of the reflected process at its infimum
$X-\underline{X}$, and the measure of its excursions away from 0
are defined in the same way as for $\overline{X}-X$. They are
respectively denoted by $L^*$ and $n^*$.
Then the ladder time processes $\tau$ and $\tau^*$, and the ladder
height processes $H$ and $H^*$ are the following
(possibly killed) subordinators:
\begin{eqnarray*}
\tau_t&=&\inf\{s:L_s>t\},\qquad \tau^*_t=\inf\{s:L_s^*>t\},\\
H_t&=&X_{\tau_t}, \qquad H^*_t=-X_{\tau_t^*},\qquad t\ge0,
\end{eqnarray*}
where $\tau_t=H_t=+\infty$, for $t\ge\zeta(\tau)=\zeta(H)$ and
$\tau
_t^*=H_t^*=+\infty$, for $t\ge\zeta(\tau^*)=\zeta(H^*)$.
The characteristic exponent $\kappa$ of the ladder process $(\tau,H)$
may be defined by
\[
\e\biggl(\int_0^\infty \,dL_te^{-qt}\exp(-\alpha t-\beta\overline
{X}_t)\biggr)=\frac1{\kappa(q+\alpha,\beta)},\qquad q>0, \alpha,\beta\ge0.
\]
From (\ref{norm1}), we derive that $\kappa(1,0)=\kappa^*(1,0)=1$, so
that the Wiener--Hopf factorization in time (which is stated
in~\cite{be}, page 166 and in~\cite{ky}, page 166) is normalized as follows:
%
%
%e6 ###
\begin{equation}\label{wh}
\kappa(\alpha,0)\kappa^*(\alpha,0)=\alpha,\qquad \mbox{for all $\alpha
\ge0$.}
\end{equation}
Recall also that the drifts $\mathtt{d}$ and $\mathtt{d}^*$ of the
subordinators $\tau$ and $\tau^*$ satisfy $\mathtt{d}=0$
(resp., $\mathtt{d}^*=0$) if and only if 0 is regular for $(-\infty, 0)$
[resp., for $(0,\infty)$], and that
%
%
%e7 ###
\begin{equation}\label{delta}
\int_0^t\ind_{\{X_s=\overline{X}_s\}} \,ds=\mathtt{d} L_t \quad\mbox{and}\quad
\int_0^t\ind_{\{X_s=\underline{X}_s\}} \,ds=\mathtt{d}^*L_t^*.
\end{equation}
Suppose that 0 is not regular for $(0,\infty)$, and let $\mathbf{ e}$ be
an independent exponential time with mean 1,
then from (\ref{norm1}) and (\ref{delta}), $\p((X-\underline
{X})_\mathbf{e}=0)=\mathtt{d}^*$. From the time reversal property of L\'evy processes,
$\p((X-\underline{X})_\mathbf{ e}=0)=\p(\overline{X}_\mathbf{
e}=0)=\p
(\tau_0^+\ge\mathbf{ e})=\gamma^{-1}$, so that
$\mathtt{d}^*=\gamma^{-1}$.

We will denote by $q_t^*$ and $q_t$ the entrance laws of the reflected
excursions at the maximum and at the minimum, that is, for $t>0$,
\[
q_t(dx)=n(X_t\in dx,t<\zeta)\quad \mbox{and}\quad q_t^*(dx)=n^*(X_t\in
dx,t<\zeta).
\]
They will be considered as measures on $\mathbb{R}_+=[0,\infty)$.
Recall that the law of the lifetime of the reflected excursions is
related to the L\'evy measure
of the ladder time processes, through the equalities
%
%
%e8 ###
\begin{eqnarray}\label{pi}
q_t(\mathbb{R_+})&=&n(t<\zeta)=\overline{\pi}(t)+a \quad\mbox{and}
\nonumber
\\[-8pt]
\\[-8pt]
\nonumber
q_t^*(\mathbb{R_+})&=&
n^*(t<\zeta)=\overline{\pi}^*(t)+a^*,
\end{eqnarray}
where $\overline{\pi}(t)=\pi(t,\infty)$ and $\overline{\pi
}^*(t)=\pi
^*(t,\infty)$ and $a$, $a^*$ are the killing rates
of the subordinators $\tau$ and $\tau^*$.

In this paper, we will sometimes write $\mu\ll\nu$, when $\mu$ is
absolutely continuous with respect to $\nu$. We will say that
$\mu$ and $\nu$ are \textit{equivalent} if $\mu\ll\nu$ and $\nu\ll
\mu$.
We will denote by $\lambda$ the Lebesgue measure on $\mathbb{R}$. A
measure which is absolutely continuous
with respect to the Lebesgue measure will sometimes be called {\it
absolutely continuous}. A measure which has no
atoms will be called \textit{continuous}.

%s3 ###
\section{Continuity properties of the law of $(g_t,\overline
{X}_t,X_t)$}\label{main}
\setcounter{equation}{0}
In this section, $X$ is any L\'evy process such that $|X|$ is not
subordinator, and except in Theorem~\ref{new}, we assume that
$X$ is not a compound Poisson process.

For $t>0$ and $q>0$, we will denote, respectively by $p_t(dx)$ and
$U_q(dx)$, the semigroup and the resolvent measure
of $X$, that is, for any positive Borel function~$f$,
\begin{eqnarray*}
\e(f(X_t))&=&\int_0^\infty f(x)p_t(dx) \quad\mbox{and}\\
 \int_0^\infty
f(x) U_q(dx)&=&\e\biggl(\int_0^\infty
e^{-qt}f(X_t) \,dt\biggr).
\end{eqnarray*}
Since $U_q(A)=0$ if and only if $\p(X_t\in A)=0$, for $\lambda$ almost
every $t$, it follows that
for all $q$ and $q'$, the resolvent measures $U_q(dx)$ and $U_{q'}(dx)$
are equivalent.
For the same reason, each measure $U_q$ is equivalent to the potential
measure $U_0(dx)=\int_0^\infty\p(X_t\in dx) \,dt$.
In what follows, when comparing the law of $\overline{X}_t$ to the
measures $U_q$, $q\ge0$,
we will take $U(dx)\eqdef U_1(dx)$ as a reference measure.
We will say that a L\'evy process $X$ is of:
\begin{itemize}
\item type 1 if 0 is regular for both $(-\infty,0)$ and $(0,\infty)$;

\item type 2 if 0 is not regular for $(-\infty,0)$;

\item type 3 if 0 is not regular for $(0,\infty)$.
\end{itemize}
We emphasize that since $X$ is not a compound Poisson
process, types 1, 2 and 3
define three exhaustive cases.
Recall that $\mathbb{R}_+=[0,\infty)$, and let $\mathcal{ B}_{\mathbb
{R}_+}$ be the Borel $\sigma$-field on $\mathbb{R}_+$.
For $t>0$, let $\mu_t^+$ be the restriction to
$(\mathbb{R}_+,\mathcal{ B}_{\mathbb{R}_+})$ of the average occupation
measure of $X$, on the time interval $[0,t)$, that is,
\[
\int_{[0,\infty)}f(x) \mu_t^+(dx)=\e\biggl(\int_0^tf(X_s) \,ds\biggr),
\]
for every nonnegative Borel function $f$ on $(\mathbb{R},\mathcal{
B}_{\mathbb{R}})$, such that $f\equiv0$ on $(-\infty,0)$.
Moreover, we will denote by $p_t^+(dx)$ the restriction
of the semigroup $p_t(dx)$ to $(\mathbb{R}_+,\mathcal{ B}_{\mathbb{R}_+})$.
In particular, we have $\mu_t^+=\int_0^t p_s^+ \,ds$.
The law of $\overline{X}_t$ will be considered as a measure on
$(\mathbb
{R}_+,\mathcal{ B}_{\mathbb{R}_+})$. In all the remainder of
this article, we assume that the time $t$ is deterministic and finite.
\begin{theorem}\label{type} For $t>0$, the law of the past supremum
$\overline{X}_t$ can be compared
to the occupation measure $\mu_t^+$ as follows:
\begin{enumerate}[(1)]
\item[(1)] If $X$ is of type $1$, then for all $t>0$, the law of
$\overline{X}_t$ is equivalent to $\mu_t^+$.
\item[(2)] If $X$ is of type $2$, then for all $t>0$, the law of
$\overline{X}_t$ is equivalent
to $p_t^+(dx)+\mu_t^+(dx)$.
\item[(3)] If $X$ is of type $3$, then for all $t>0$, the law of
$\overline{X}_t$ has an atom at $0$ and its restriction
to the open halfline $(0,\infty)$ is equivalent to the restriction of
the measure $\mu_t^+(dx)$ to $(0,\infty)$.
\end{enumerate}
\end{theorem}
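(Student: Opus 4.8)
The plan is to reduce the three statements to a single excursion-theoretic disintegration of the joint law of $(g_t,\overline{X}_t,X_t)$, combined with the equivalence $p_t^+\sim q_t^*$ on $\mathbb{R}_+$ (Lemma \ref{equivalence1}), which I would establish first. The central identity I would prove is that, on the region $\{0<s<t,\ x\le y\}$,
\begin{equation}
\p(g_t\in ds,\overline{X}_t\in dy,X_t\in dx)=q_s^*(dy)\,q_{t-s}(d(y-x)),\tag{$\ast$}
\end{equation}
supplemented by boundary contributions carried by the ladder drifts ${\tt d}$ and ${\tt d}^*$. I would obtain $(\ast)$ by decomposing the path at the last time $g_t$ at which the maximum is attained: the post-$g_t$ piece is the excursion of $\overline{X}-X$ straddling $t$, which by the compensation formula (\ref{compensation}) contributes the entrance law $q_{t-s}$ evaluated at the current reflected height $\overline{X}_t-X_t=y-x$; the pre-$g_t$ piece reaches its maximum $y$ at time $s$, and after time reversal this is a path of the dual staying positive, which identifies its law with the descending entrance law $q_s^*(dy)$. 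A good consistency check is that $(\ast)$ has total mass $1$: its double Laplace transform factorises and, using $\kappa(1,0)=\kappa^*(1,0)=1$ together with the Wiener--Hopf normalisation (\ref{wh}), reduces to $\kappa^*(\alpha,0)\kappa(\alpha,0)/\alpha^2=1/\alpha$.

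Marginalising $(\ast)$ over $x$ uses $\int q_{t-s}(d(y-x))=q_{t-s}(\mathbb{R}_+)=n(t-s<\zeta)=\overline{\pi}(t-s)+a=:\phi(t-s)$ from (\ref{pi}), giving
\[
\p(\overline{X}_t\in dy)=\int_0^t \phi(t-s)\,q_s^*(dy)\,ds\ +\ (\text{boundary}).
\]
The key soft fact is then a \emph{reweighting} principle: since $\phi(u)=n(u<\zeta)>0$ for every $u>0$, a Borel set $C$ satisfies $\int_0^t\phi(t-s)q_s^*(C)\,ds=0$ if and only if $q_s^*(C)=0$ for Lebesgue-almost every $s\in(0,t)$, which holds if and only if $\int_0^t q_s^*(C)\,ds=0$. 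Hence the two mixtures $\int_0^t\phi(t-s)q_s^*(dy)\,ds$ and $\int_0^t q_s^*(dy)\,ds$ are equivalent. The identical argument, now applied through the fixed-$s$ equivalence $p_s^+\sim q_s^*$ of Lemma \ref{equivalence1}, shows $\int_0^t q_s^*\,ds\sim\int_0^t p_s^+\,ds=\mu_t^+$. Chaining these equivalences proves statement $1$ for type $1$, where no boundary term is present.

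For the irregular cases I would identify the boundary terms using the drift relations (\ref{delta}) and the non-regular excursion description (\ref{alpha})--(\ref{excdisc}). In type $2$ one has ${\tt d}>0$, so by (\ref{delta}) the process spends positive Lebesgue time at its maximum and $\p(g_t=t)=\p(X_t=\overline{X}_t)>0$; this produces an extra term $\emph{}{\tt d}\,q_t^*(dy)$ at $s=t$, whose mass Laplace transform ${\tt d}\,\kappa^*(\alpha,0)/\alpha$ exactly fills the deficit of the $\{0<s<t\}$ part. By Lemma \ref{equivalence1} this term is equivalent to $p_t^+$, so $\p(\overline{X}_t\in\cdot)\sim\mu_t^++p_t^+$, which is statement $2$. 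In type $3$ one has ${\tt d}^*=\gamma^{-1}>0$ and ${\tt d}=0$; the analogous boundary term now sits at $s=0$, forcing $\overline{X}_t=0$, and gives an atom at $0$ of mass ${\tt d}^*\,n(t<\zeta)$, while on $(0,\infty)$ the measure $q_s^*$ is carried by $(0,\infty)$ and the computation is exactly the type $1$ one, yielding equivalence with $\mu_t^+$ restricted to $(0,\infty)$; this is statement $3$.

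The routine parts are the marginalisation and the reweighting principle, which use only $\phi>0$ and the fixed-time equivalence of the lemma. The main obstacle is the rigorous proof of $(\ast)$ and, above all, the exact determination of the boundary atoms in types $2$ and $3$: this requires a careful application of excursion theory at the \emph{random} time $g_t$, the time-reversal/duality argument identifying the pre-$g_t$ factor with $q_s^*$, and the bookkeeping of the drift contributions coming from (\ref{delta}), whose consistency I would verify through the Wiener--Hopf identity (\ref{wh}) as indicated above.
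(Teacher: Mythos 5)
Your proposal follows essentially the same route as the paper's own proof: your identity $(\ast)$ with its drift boundary terms is exactly Corollary \ref{law} (obtained in the paper from the path decomposition of Theorem \ref{mainth} via the compensation formula, time reversal and Wiener--Hopf), its marginal over $x$ is Corollary \ref{law1}, and your chain of equivalences (reweighting by $n(t-s<\zeta)>0$, then the fixed-time equivalence $q_s^*\sim p_s^+$) is precisely Lemma \ref{equivalence2} combined with (\ref{pi}) and part 3 of Lemma \ref{equivalence1}. Your ``reweighting principle'' is a mildly streamlined form of Lemma \ref{equivalence2}, but the ingredients, their ordering, and the treatment of the type 2 and type 3 boundary terms coincide with the paper's argument.
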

It appears clearly from this theorem that the law of $\overline{X}_t$
is absolutely continuous for all $t>0$, whenever 0 is regular
for $(0,\infty)$ and $p_t$ is absolutely continuous, for all $t>0$. We
will see in Theorem~\ref{coro2} that a stronger result actually
holds. Let $U^+(dx)$ be the restriction to $(\mathbb{R}_+,\mathcal{
B}_{\mathbb{R}_+})$ of the resolvent measure $U(dx)$.
Since $\mu_t^+$ is absolutely continuous with respect to $U^+$ for all
$t>0$, the law of the past supremum before $t$ can be
compared to $U^+$ as follows.\looseness=-1
\begin{corollary}\label{lebesgue} Under the same assumptions as in
Theorem~\ref{type}:
\begin{enumerate}[(1)]
\item[(1)] If $X$ is of type $1$, then for any $t>0$, the law of
$\overline{X}_t$ is absolutely
continuous with respect to the resolvent measure $U^+(dx)$.
\item[(2)] If $X$ is of type $2$, then for any $t>0$, the law of
$\overline{X}_t$ is absolutely
continuous with respect to the measure $p_t^+(dx)+U^+(dx)$.
\item[(3)] If $X$ is of type $3$, then the same conclusions as in
$1.$ hold for the measures
restricted to $(0,\infty)$.
\end{enumerate}
\end{corollary}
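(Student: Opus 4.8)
The plan is to obtain all three cases as formal consequences of Theorem \ref{type}, the only additional input being the absolute continuity $\mu_t^+\ll U^+$ for every $t>0$, which is precisely the fact announced in the sentence preceding the statement. Granting this, each assertion follows by transitivity of $\ll$ together with the elementary remark that $\nu_1\ll\nu_2$ implies $\rho+\nu_1\ll\rho+\nu_2$ for any measure $\rho$.

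First I would check that $\mu_t^+\ll U^+$. Writing $\mu_t^+=\int_0^t p_s^+\,ds$ and using that $U^+$ is the restriction to $\mathbb{R}_+$ of $U=U_1$, so that $U^+(dx)=\int_0^\infty e^{-s}p_s^+(dx)\,ds$, I would take a Borel set $A\subset\mathbb{R}_+$ with $U^+(A)=0$. Since the integrand $e^{-s}p_s^+(A)$ is nonnegative and $e^{-s}>0$, the vanishing of $U^+(A)$ forces $p_s^+(A)=0$ for Lebesgue-almost every $s>0$, whence $\mu_t^+(A)=\int_0^t p_s^+(A)\,ds=0$. This is exactly $\mu_t^+\ll U^+$.

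It then remains to assemble the cases. If $X$ is of type $1$, Theorem \ref{type} gives that the law of $\overline{X}_t$ is equivalent to $\mu_t^+$, hence absolutely continuous with respect to $U^+$ by transitivity. If $X$ is of type $2$, the law of $\overline{X}_t$ is equivalent to $p_t^++\mu_t^+$; combining $\mu_t^+\ll U^+$ with the additive remark gives $p_t^++\mu_t^+\ll p_t^++U^+$, and transitivity yields the claim. If $X$ is of type $3$, Theorem \ref{type} asserts equivalence of the restrictions of the two laws to $(0,\infty)$, so the type $1$ argument applied on $(0,\infty)$ gives the conclusion there, the atom of $\overline{X}_t$ at $0$ being discarded by the restriction. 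I do not expect any real obstacle: the corollary is purely formal once $\mu_t^+\ll U^+$ is in hand, and the single point deserving attention is to carry the restriction to $(0,\infty)$ through the whole type $3$ argument, so that the atom at $0$ is never compared with $U^+$.
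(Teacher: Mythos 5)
Your proposal is correct and follows exactly the route the paper intends: the paper offers no separate proof of Corollary \ref{lebesgue}, deriving it immediately from Theorem \ref{type} and the remark that $\mu_t^+\ll U^+$, which is precisely what you do. Your only addition is the routine verification of $\mu_t^+\ll U^+$ via $U^+(dx)=\int_0^\infty e^{-s}p_s^+(dx)\,ds$ and the bookkeeping for types $2$ and $3$, both of which are sound.
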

Whenever $X$ is not a compound Poisson process, the
resolvent measure $U^+(dx)$ is continuous;
see Proposition I.$15$ in~\cite{be}.
Moreover, the measure $p_t^+(dx)$ is also continuous for all $t>0$; see
Theorem 27.4 in Sato~\cite{sa}.
Hence from Corollary~\ref{lebesgue}, for all $t>0$, when $X$ is of type
$1$ or $2$, the law of $\overline{X}_t$ is continuous,
and when it is of type 3, this law has only one atom at $0$. This fact
has already been observed in~\cite{pr}, Lemma~1.

It is known that for a L\'evy process $X$, the law of $X_t$ may be
absolutely continuous for all $t>t_0$, whereas it is continuous
singular for $t\in(0,t_0)$; see Theorem~27.23 and Remark 27.24 in
\cite
{sa}. The following theorem shows that when $X$ is of type 1,
this phenomenon cannot happen for the law of the supremum, that is,
either absolute continuity of the law of $\overline{X}_t$ holds at any
time $t$ or it never holds. We denote by $V(dt,dx)$ the potential
measure of the ladder process $(\tau,H)$ and by $V(dx)$ the potential\vadjust{\goodbreak}
measure of the ladder height process $H$, that is,
\[
V(dt,dx)=\int_0^\infty\p(\tau_s\in dt,H_s\in dx) \,ds\quad \mbox{and}\quad
V(dx)=\int_0^\infty\p(H_s\in dx) \,ds.
\]
Then let $\lambda^+$ be the Lebesgue measure on
$\mathbb{R}_+$.\vspace*{-3pt}

\begin{theorem}\label{th2} Suppose that $X$ is of type $1$. The
following assertions are equivalent:
\begin{enumerate}[(1)]
\item[(1)] The law of $\overline{X}_t$ is absolutely continuous with
respect to $\lambda^+$, for all $t>0$.
\item[(2)] The law of $\overline{X}_t$ is absolutely continuous with
respect to $\lambda^+$, for some $t>0$.
\item[(3)] The resolvent measure $U^+(dx)$ is absolutely continuous
with respect to~$\lambda^+$.
\item[(4)] The resolvent measure $U(dx)$ is absolutely continuous
with respect to $\lambda$.
\item[(5)] The potential measure $V(dx)$ is absolutely continuous
with respect to~$\lambda^+$.
\end{enumerate}
Moreover assertions 1--5 are equivalent to the same assertions
formulated for the dual process $-X$.
In particular, 1--5 hold if and only if the law of $-\underline
{X}_t$ is absolutely continuous with respect
to $\lambda^+$, for all $t>0$.\vspace*{-3pt}
\end{theorem}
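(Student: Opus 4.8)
The plan is to prove the cycle of implications $(4)\Rightarrow(3)\Rightarrow(2)\Rightarrow(1)\Rightarrow(4)$, and then to obtain both the ``for all $t$'' assertion and the duality as by-products of the equivalence with $(3)$. Two identities carry the argument. The first is an excursion-theoretic formula for the marginal law of $\overline{X}_t$: since $X$ is of type $1$ we have ${\tt d}=0$, so by (\ref{delta}) the set $\{s:X_s=\overline{X}_s\}$ is Lebesgue-negligible and, a.s., $g_t$ is the left end point of the excursion of $\overline{X}-X$ straddling $t$. Applying the compensation formula (\ref{compensation}) to these excursions, together with $\overline{X}_s=H_{L_s}$, gives
\[\p(g_t\in ds,\overline{X}_t\in dy)=n(\zeta>t-s)\,V(ds,dy),\qquad\p(\overline{X}_t\in dy)=\int_{[0,t]}n(\zeta>t-s)\,V(ds,dy),\]
where by (\ref{pi}) the weight $n(\zeta>u)=\overline{\pi}(u)+a$ is strictly positive for every $u>0$. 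The second identity is the spatial Wiener--Hopf factorization. Writing $V^{(q)}$ and $V^{*(q)}$ for the $q$-potential measures of the ladder height subordinators $H$ and $H^*$, decomposing $X$ at an independent exponential time of parameter $q$ into its supremum and its reflected infimum yields, after using (\ref{wh}),
\[U_q(dx)=\int_{[0,\infty)}V^{(q)}(dx+z)\,V^{*(q)}(dz).\]
Finally, since $e^{-q\tau_\ell}>0$ on $\{\ell<\zeta(H)\}$, the measures $V^{(q)}$ and $V$ are equivalent, so $(4)$ may be tested on $V$ or on any $V^{(q)}$ indifferently.

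The three implications $(4)\Rightarrow(3)\Rightarrow(2)\Rightarrow(1)$ are then immediate. If $V\ll\lambda^+$ then $V^{(q)}\ll\lambda^+$, say with density $v$; the convolution identity above then gives $U_q$ the density $x\mapsto\int_{[0,\infty)}v(x+z)\,V^{*(q)}(dz)$, so $U=U_1\ll\lambda$, which is $(3)$. Restricting to $\R_+$ gives $(2)$. For $(2)\Rightarrow(1)$, the elementary bound $\mu_t^+(dx)=\int_0^t p_s^+(dx)\,ds\le e^{t}\int_0^\infty e^{-s}p_s^+(dx)\,ds=e^{t}U^+(dx)$ shows $\mu_t^+\ll U^+\ll\lambda^+$ for every $t$, whence by Theorem \ref{type}$(1)$ (the law of $\overline{X}_t$ being equivalent to $\mu_t^+$) the law of $\overline{X}_t$ is absolutely continuous for all $t$, in particular $(1)$.

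The heart of the matter is $(1)\Rightarrow(4)$: one must preclude, for the supremum, the phenomenon of Theorem $27.23$ in \cite{sa} whereby absolute continuity sets in only after a positive time. Suppose the law of $\overline{X}_{t_1}$ is absolutely continuous for some $t_1>0$. Put $\sigma_t=\inf\{\ell:\tau_\ell\ge t\}$ and, for Borel $A\subset\R_+$,
\[\Phi_t(A)=\e\left(\int_0^{\sigma_t}\ind_{\{H_\ell\in A\}}\,d\ell\right)=\int_0^\infty\p(\tau_\ell<t,\,H_\ell\in A)\,d\ell.\]
Since $n(\zeta>t_1-\tau_\ell)>0$ on $\{\tau_\ell<t_1\}$, the marginal formula shows that for negligible $A$ one has $\p(\overline{X}_{t_1}\in A)=0$ if and only if $\Phi_{t_1}(A)=0$; so the hypothesis gives $\Phi_{t_1}(A)=0$ for every negligible $A$, and, as $\Phi_t(A)$ is nondecreasing in $t$, also $\Phi_t(A)=0$ for all $t\le t_1$ and all negligible $A$. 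The key step is a doubling argument: applying the strong Markov property of the bivariate subordinator $(\tau,H)$ at the level $\sigma_{t_1}$, at which $\tau$ overshoots $t_1$, gives for $T>t_1$
\[\Phi_T(A)=\Phi_{t_1}(A)+\e\left(\Phi_{(T-\tau_{\sigma_{t_1}})^+}\big(A-H_{\sigma_{t_1}}\big)\right).\]
Because $\tau_{\sigma_{t_1}}\ge t_1$, the remaining time $(T-\tau_{\sigma_{t_1}})^+$ is at most $T-t_1$; so if $T\le 2t_1$ it does not exceed $t_1$, and since $A-H_{\sigma_{t_1}}$ is again negligible both terms on the right vanish. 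Thus $\Phi_T(A)=0$ for all $T\le 2t_1$ and all negligible $A$; iterating doubles the admissible range, so $\Phi_T(A)=0$ for every $T>0$. Letting $T\uparrow\infty$ and using $\sigma_T\uparrow\zeta(H)$ gives $V(A)=\e\big(\int_0^{\zeta(H)}\ind_{\{H_\ell\in A\}}\,d\ell\big)=0$, that is $V\ll\lambda^+$, which is $(4)$; the same passage shows $(1)$, once true at one time, holds at every time. I expect this doubling/regeneration step to be the main obstacle: the rest is soft, whereas here one must genuinely exploit the independent increments of the ladder process together with the control of the overshoot of $\tau$ past $t_1$.

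For the duality, assertion $(3)$ is manifestly invariant under $X\mapsto -X$, since the $q$-resolvent of $-X$ is the image of $U_q$ under $x\mapsto-x$, so its absolute continuity is equivalent to that of $U$. Moreover $-X$ is again of type $1$, and its ascending ladder height process coincides in law with $H^*$, so that the potential measure associated with $-X$ is the one associated with $H^*$. Applying the equivalence $(1)$--$(4)$ just established to $-X$ therefore shows that $(1)$--$(4)$ for $X$ are equivalent to the corresponding assertions for $-X$; in particular, since $\overline{(-X)}_t=-\underline{X}_t$, they hold if and only if the law of $-\underline{X}_t$ is absolutely continuous for all $t>0$.
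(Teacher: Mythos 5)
Your proposal is correct, and it reaches the theorem by a genuinely different route in the one step that matters. The paper's proof runs: (i) absolute continuity at one time propagates to all times via the decomposition $\overline{X}_{2t}=\max\bigl\{\overline{X}_t,\,X_t+\sup_{s\le t}X^{(1)}_s\bigr\}$ with $X^{(1)}$ an independent copy, combined with the equivalence of the law of $\overline{X}_t$ with $\mu_t^+$ (Theorem \ref{type}); (ii) $1\Rightarrow3$ by writing $X_{\bf e}=(X_{\bf e}-\overline{X}_{\bf e})+\overline{X}_{\bf e}$ as a sum of independent terms at an exponential time, so $U$ is a convolution with an absolutely continuous factor; (iii) $3\Rightarrow2\Rightarrow1$ via Corollary \ref{lebesgue}; (iv) $4\Leftrightarrow1$ through $V(dx)=\int_0^\infty q_s^*(dx)\,ds$ (Lemma \ref{equivalence1}) and Lemma \ref{equivalence2}. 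You instead close the cycle $4\Rightarrow3\Rightarrow2\Rightarrow1\Rightarrow4$: your $4\Rightarrow3$ is the same Wiener--Hopf factorization, used in convolution form $U_q=V^{(q)}*\check{V}^{*(q)}$ with the normalization (\ref{wh}); your $2\Rightarrow1$ coincides with Corollary \ref{lebesgue}; and your $1\Rightarrow4$ — the step excluding a critical time $t_0$ — replaces the paper's doubling of the supremum by a regeneration argument for the ladder subordinator $(\tau,H)$ at the passage level $\sigma_{t_1}$, exploiting the overshoot bound $\tau_{\sigma_{t_1}}\ge t_1$ and translation invariance of $\lambda^+$. That argument is sound (the strong Markov split of $\Phi_T$ and the induction over horizons $2^k t_1$ hold, including on killing events, where the inner term vanishes), and it buys a clean self-contained fact about bivariate subordinators: if $A\mapsto V([0,t_1)\times A)$ is absolutely continuous, then so is $V(dx)$; the paper's route is softer but needs the entrance-law lemmas to tie $V$ to the law of $\overline{X}_t$. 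The two facts you assert without proof — the identity $\p(g_t\in ds,\overline{X}_t\in dy)=n(\zeta>t-s)\,V(ds,dy)$ and the strict positivity of $n(\zeta>u)$ for all $u>0$ — are exactly the paper's identity (\ref{potential}) and a claim the paper itself makes without proof in the proof of Theorem \ref{coro2}, so your reliance on them is at the paper's own level of rigor; your duality argument through the invariance of assertion $3$ under $X\mapsto-X$ likewise matches the paper's implicit one.
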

Condition 4 of the above theorem is satisfied whenever the
drift coefficient
of the subordinator $H$ is positive; see Theorem II.16 and Corollary
II.20 in~\cite{be}.
Let us also mention that necessary and sufficient conditions for
$U(dx)$ to be absolutely continuous may be found in Theorem 41.15 of
\cite{sa}, and in Proposition 10, Chapter~I
of~\cite{be}. Formally, $U\ll\lambda$ if and only if for some $q>0$
and for all bounded Borel function $f$, the function
$x\mapsto\e_x(\int_0^\infty f(X_t)e^{-qt} \,dt)$ is continuous.
However, we do not know any necessary and sufficient conditions bearing
directly on the characteristic exponent $\psi$ of $X$.
Let us simply recall the following sufficient condition. From Theorem
II.16 in~\cite{be}, if
%
%
%e1 ###
\begin{equation}\label{intest}
\int_{-\infty}^\infty\Re\biggl(\frac1{1+\psi(x)}\biggr) \,dx<\infty,
\end{equation}
then $U(dx)\ll\lambda$, with a bounded density. Therefore, if $X$ is of
type 1, then from Theorem~\ref{th2}, condition $(\ref{intest})$
implies that both the laws of $\underline{X}_{\hspace*{1pt}t}$ and $\overline{X}_t$
are absolutely continuous for all $t>0$.

A famous result from~\cite{fu} asserts that when $X$ is a symmetric
process, condition $U\ll\lambda$ implies that
$p_t\ll\lambda$, for all $t>0$. Then it follows from Theorem \ref
{th2} that in this particular case, absolute continuity
of the law of $\overline{X}_t$, for some $t>0$ (hence for all $t>0$)
is equivalent to the absolute continuity of the semigroup
$p_t$, for all $t>0$.\vspace*{-3pt}

\begin{theorem}\label{coro2} If $0$ is regular for $(0,\infty)$, then
the following assertions are equivalent:
\begin{enumerate}[(1)]
\item[(1)] The measures $p_t^+$ are absolutely continuous with
respect to $\lambda^+$, for all $t>0$.\vadjust{\goodbreak}
\item[(2)] The measures $p_t$ are absolutely continuous with respect
to $\lambda$, for all $t>0$.
\item[(3)] The potential measure $V(dt,dx)$ is absolutely continuous
with respect to the Lebesgue measure on $\mathbb{R}_+^2$.
\end{enumerate}
If moreover $X$ is of type $1$, then each of the following assertions
is equivalent to 1--3:
\begin{enumerate}[(1)]
\item[(4)] The law of $(g_t,\overline{X}_t)$ is absolutely continuous
with respect to the Lebesgue measure on
$[0,t]\times\mathbb{R}_+$, for all $t>0$.
\item[(5)] The law of $(g_t,\overline{X}_t,X_t)$ is absolutely
continuous with respect to the Lebesgue measure on
$[0,t]\times\mathbb{R}_+\times\mathbb{R}$, for all $t>0$.
\end{enumerate}
\end{theorem}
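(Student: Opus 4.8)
The plan is to reduce every assertion to an absolute‑continuity property of the two entrance laws $q$ and $q^*$, and to run everything through three inputs. The first is the equivalence $p_t^+\sim q_t^*$ of Lemma \ref{equivalence1}. The second is the representation of the joint law announced in the introduction and established in Section \ref{expressions},
\[\mathbb{P}(g_t\in ds,\overline{X}_t\in dy,X_t\in dx)=q_s^*(dy)\,q_{t-s}(d(y-x)),\qquad(\star)\]
valid for $s\in(0,t)$, $0\le x\le y$, in which the ascent $\overline{X}_t=y$ is carried by the minimum‑reflected entrance law $q_s^*$ (via time reversal at $g_t$) and the descent $\overline{X}_t-X_t=y-x$ by the maximum‑reflected entrance law $q_{t-s}$. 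The third is the fluctuation identity, available precisely because $0$ is regular for $(0,\infty)$, i.e. because ${\tt d}^*=0$ by \eqref{delta},
\[V(dt,dx)=q_t^*(dx)\,dt\quad\text{on }\mathbb{R}_+\times\mathbb{R}_+.\qquad(\dagger)\]
I would obtain $(\dagger)$ by computing the double Laplace transform of $ds\,q_s^*(dy)$ and checking, using \eqref{pi} and the Wiener--Hopf normalisation \eqref{wh} (so that $\kappa^*(\alpha,0)=\alpha/\kappa(\alpha,0)$), that it equals $1/\kappa(\alpha,\beta)$, the transform of $V$; the only discrepancy in this computation is the drift ${\tt d}^*$, which our hypothesis makes vanish.

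For the first block I would argue in the cycle $2\Rightarrow1\Rightarrow3\Rightarrow2$. The implication $2\Rightarrow1$ is immediate, $p_t^+$ being the restriction of $p_t$. For $1\Rightarrow3$: if $p_t^+\ll\lambda^+$ for every $t$, then, $s\mapsto p_s^+$ being measurable with absolutely continuous values, the space--time measure $ds\,p_s^+(dy)$ is absolutely continuous on $(0,\infty)\times\mathbb{R}_+$; by Lemma \ref{equivalence1} the same holds for $ds\,q_s^*(dy)$, and $(\dagger)$ turns this into $V\ll\lambda$ on $\mathbb{R}_+^2$, which is assertion 3. For $3\Rightarrow2$: writing $p_t$ as the $(g_t,\overline{X}_t)$--marginal of $(\star)$,
\[p_t(dx)=\int_0^t\!\!\int_{[0,\infty)}q_s^*(dy)\,q_{t-s}(d(y-x))\,ds,\]
assertion 3 together with $(\dagger)$ gives $q_s^*(dy)=v(s,y)\,dy$ for almost every $s$; then, conditionally on $g_t=s$, $\overline{X}_t$ has the absolutely continuous law $v(s,\cdot)$ and is independent of the descent $\overline{X}_t-X_t$, so $X_t$ is a difference of an absolutely continuous variable and an independent one, hence absolutely continuous on all of $\mathbb{R}$; integrating in $s$ yields $p_t\ll\lambda$ for every $t$. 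This is exactly where one‑sided information propagates to the whole line: the absolutely continuous factor $q_s^*$ smears the arbitrary factor $q_{t-s}$. Since assertion 3 does not depend on $t$, this cycle also explains why $1$ holds for one $t$ if and only if it holds for all.

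For the second block assume in addition that $X$ is of type $1$, so that $0$ is also regular for $(-\infty,0)$ and Lemma \ref{equivalence1} applies to the dual $-X$; as the process reflected at its minimum for $-X$ is $\overline{X}-X$, this gives $p_t^-\sim q_t$. Integrating $(\star)$ in $x$ only,
\[\mathbb{P}(g_t\in ds,\overline{X}_t\in dy)=q_s^*(dy)\,\big(\overline{\pi}(t-s)+a\big)\]
by \eqref{pi}, so assertion 4 holds iff $ds\,q_s^*(dy)\ll\lambda$, which by $(\dagger)$ is assertion 3; thus $3\Leftrightarrow4$. Keeping both space variables and setting $w=y-x$, the law of $(g_t,\overline{X}_t,X_t)$ is absolutely continuous iff \emph{both} $ds\,q_s^*(dy)$ and $dr\,q_r(dw)$ are. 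The first is assertion 3; the second is forced by the equivalence $1\Leftrightarrow2\Leftrightarrow3$ just proved, since under type $1$ assertion 3 gives $p_t\ll\lambda$, hence $p_t^-\ll\lambda$, hence (dual Lemma \ref{equivalence1}) $q_t\ll\lambda$ for all $t$, i.e. $dr\,q_r(dw)\ll\lambda$. Thus assertion 3 implies both conditions while $5$ trivially implies the first, whence $3\Leftrightarrow5$, completing the equivalence.

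The main obstacle is the identity $(\dagger)$: it is the bridge between the ladder‑height object of assertion 3 and the entrance‑law objects of assertions 1, 4 and 5, and it is here that regularity of $0$ for $(0,\infty)$ is indispensable, the missing term being precisely the drift ${\tt d}^*$ annihilated by \eqref{delta}. Everything else is either the representation $(\star)$ (the structural input from Section \ref{expressions}) or soft measure theory: the passage between ``absolutely continuous for every fixed $t$'' and ``absolutely continuous as a space--time measure'', and the stability of absolute continuity under convolution with one absolutely continuous factor. Some care is needed in the irregular case relevant to the first block (type $2$, where ${\tt d}>0$): there $(\star)$ acquires boundary contributions supported on $\{x=y\}$ coming from the positive time spent at the maximum, and I would check that the absolutely continuous factor $q_s^*$ again absorbs them so that the equivalences persist.
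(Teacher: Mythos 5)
Your overall strategy coincides with the paper's: the same three inputs (part 3 of Lemma \ref{equivalence1}, the representation of Corollary \ref{law} / Corollary \ref{semigroup}, and identity (\ref{26811}), your $(\dagger)$) drive the same cycle among assertions $1$--$3$ and the same dual argument for assertions $4$ and $5$. However, there is one genuine gap, and it sits exactly where you wrote ``some care is needed'': the implication $3\Rightarrow2$ when $X$ is of type $2$, which is permitted in the first block since only regularity of $0$ for $(0,\infty)$ is assumed. There your $(\star)$ must be replaced by (\ref{both}), which carries the extra atom ${\tt d}\,\delta_{\{t\}}(ds)\,q_t^*(dx)\,\delta_{\{0\}}(dy)$; equivalently, $p_t=\int_0^t\overline{q}_s*q^*_{t-s}\,ds+{\tt d}\,q_t^*$ in Corollary \ref{semigroup}. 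Now assertion $3$ together with $(\dagger)$ only yields $q_s^*\ll\lambda^+$ for \emph{almost every} $s$. Your convolution integral does absorb this (a.e.\ $s$ suffices under the $ds$-integral, so that part of $p_t$ is absolutely continuous for \emph{every} $t$, just as in type $1$), but the atomic term ${\tt d}\,q_t^*$ involves the single time $t$, and ``a.e.\ $t$'' says nothing about that particular $t$. So your proposed fix --- ``the absolutely continuous factor $q_s^*$ again absorbs them'' --- fails for precisely this term: nothing is convolved or integrated in $s$ there. Probabilistically, this term is the event $\{g_t=t\}$, which has positive probability in type $2$, and on it $X_t=\overline{X}_t$ has law proportional to $q_t^*$ at the exact time $t$.

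The missing step is an upgrade from ``a.e.\ $t$'' to ``all $t$'', and the paper supplies it at the level of the semigroup: from Corollary \ref{semigroup} and a.e.-$t$ absolute continuity of $q_t^*$ one first gets $p_t\ll\lambda$ for a.e.\ $t$; then, since $p_t=p_{t_0}*p_{t-t_0}$ and the convolution of an absolutely continuous measure with any finite measure is absolutely continuous, $p_{t_0}\ll\lambda$ for some $t_0$ implies $p_t\ll\lambda$ for all $t\ge t_0$, and such $t_0$ may be taken arbitrarily small. (Alternatively one can bootstrap the entrance law directly, using $q_{t_0+s}^*(A)\le\left(q_{t_0}^**p_s\right)(A)$ to propagate $q_{t_0}^*\ll\lambda^+$ forward in time.) Once this insertion is made, your cycle closes, and the rest of your argument --- $2\Rightarrow1$ trivially, $1\Rightarrow3$ via part 3 of Lemma \ref{equivalence1} and $(\dagger)$, the equivalence $3\Leftrightarrow4$ through $\p(g_t\in ds,\overline{X}_t\in dx)=n(t-s<\zeta)V(ds,dx)$, and $3\Rightarrow5\Rightarrow4$ via the dual form of part 3 of Lemma \ref{equivalence1} --- is sound and is essentially identical to the paper's own proof.
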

We may wonder if the equivalence between assertions (1) and (2)
of Theorem~\ref{coro2} still holds when $t$ is fixed, that is, when 0
is regular for $(0,\infty)$, does the condition $p_t^+\ll\lambda^+$,
imply that $p_t\ll\lambda$?
A counterexample in the case where 0 is not regular for $(0,\infty)$
may easily be found. Take for instance, $X_t=Y_t-S_t$, where $Y$ is a compound
Poisson process with absolutely continuous L\'evy measure, and $S$ is a
subordinator independent of $Y$, whose law at time
$t>0$ is continuous singular. Then clearly $p_t^+\ll\lambda^+$, and
there exists a Borel set $A\subset(-\infty,0)$, such that $\lambda(A)=0$
and $\p(-S_t\in A)>0$, so that $p_t(A)>\p(Y_t=0)\p(S_t\in A)>0$.

Let $Y$ be a c\`adl\`ag stochastic process such that $Y_0=0$, a.s. We
say that $Y$ is an \textit{elementary process} if there is
an increasing sequence $(T_n)$ of nonnegative random variables, such
that $T_0=0$ and $\lim_{n\rightarrow+\infty}T_n=+\infty$, a.s.
and two sequences of finite real-valued random variables $(a_n, n\ge
0)$ and $(b_n, n\ge0)$ such that $b_0=0$ and
%
%
%e2 ###
\begin{equation}\label{elem}
Y_t=a_nt+b_n\qquad \mbox{if } t\in[T_n,T_{n+1}).
\end{equation}
We say that $Y$ is a \textit{step process} if it is an elementary process
with $a_n=0$, for all $n$ in the above definition.

\begin{proposition}\label{coro3} Suppose that $0$ is regular for
$(0,\infty)$.
\begin{enumerate}[(1)]
\item[(1)] If $0$ is regular for $(-\infty,0)$, and if the law of
$\overline{X}_t$ is absolutely continuous for some $t>0$,
then for any step process $Y$ which is
independent of $X$, the law of $\sup_{s\le t}(X+Y)_s$ is absolutely
continuous for all $t>0$.
\item[(2)] If $p_t^+\ll\lambda^+$, for all $t>0$, or if $X$ has
unbounded variation, and if at least one of the ladder height processes
$H$ and $H^*$ has a positive drift, then for any elementary stochastic
process $Y$ which is independent of $X$,
the law of $\sup_{s\le t}(X+Y)_s$ is absolutely continuous for all $t>0$.
\end{enumerate}
\end{proposition}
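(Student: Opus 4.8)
The plan is to first strip the randomness out of $Y$. Since $Y$ is independent of $X$, one has $\p(\sup_{s\le t}(X+Y)_s\in\cdot)=\e[\p(\sup_{s\le t}(X+Y)_s\in\cdot\mid Y)]$, and a mixture of absolutely continuous measures is again absolutely continuous; so it is enough to prove the conclusion for a \emph{deterministic} elementary (resp.\ step) path $y$. For such a path, the defining sequence $(T_n)$ tends to $+\infty$, so on the compact interval $[0,t]$ there are a.s.\ only finitely many breakpoints $0=t_0<t_1<\dots<t_N\le t$, and on each piece $[t_k,t_{k+1})$ one has $(X+y)_s=X_s+a_ks+b_k=:X^{(a_k)}_s+b_k$, where $X^{(a)}$ denotes $X$ with its drift modified by $a$ (so $a_k=0$ for every $k$ in the step case). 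Thus $X+y$ is, piece by piece, a drifted L\'evy process shifted by a constant.

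Next I would decompose the supremum according to the piece on which it is attained. Writing $M=\sup_{s\le t}(X+y)_s=\max_{0\le k\le N}M_k$ with $M_k=\sup_{s\in[t_k,t_{k+1})\cap[0,t]}(X+y)_s$, the strong Markov property of $X$ at the deterministic time $t_k$ yields
\[
M_k=b_k+X^{(a_k)}_{t_k}+\overline{W^{(k)}}_{\Delta_k}\,,
\]
where $\Delta_k=(t_{k+1}\wedge t)-t_k$ and $W^{(k)}$ is a copy of $X^{(a_k)}$ independent of $\mathcal{F}_{t_k}$, so that the fresh supremum $\overline{W^{(k)}}_{\Delta_k}=\sup_{u\le\Delta_k}W^{(k)}_u$ is independent of the $\mathcal{F}_{t_k}$-measurable quantity $\zeta_k=b_k+X^{(a_k)}_{t_k}$. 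The elementary observation I would use repeatedly is that if $\xi$ is absolutely continuous and independent of $\zeta$, then $\zeta+\xi$ is absolutely continuous, since $\p(\zeta+\xi\in A)=\e[\,\p(\xi\in A-\zeta\mid\zeta)\,]=0$ whenever $\lambda(A)=0$. As $\{M\in A\}\subseteq\bigcup_k\{M_k\in A\}$, it suffices that each $M_k$ be absolutely continuous, and by the observation this holds (for $k\ge1$) as soon as $\overline{W^{(k)}}_{\Delta_k}$ is; for $k=0$, where $t_0=0$ forces $\zeta_0=0$, one has directly $M_0=\overline{X^{(a_0)}}_{\Delta_0}$. Everything therefore reduces to one assertion: \emph{for every real $a$ and every $u>0$, the law of $\overline{X^{(a)}}_u$ is absolutely continuous with respect to $\lambda^+$.}

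For Part~$1$ this is immediate: a step process has $a=0$ on every piece, $X^{(0)}=X$ is of type~$1$, and since the law of $\overline{X}_t$ is absolutely continuous for some $t$, Theorem~\ref{th2} makes it absolutely continuous for all $u>0$. For Part~$2$ the assertion must be secured uniformly in the drift $a$. Under the unbounded-variation hypothesis, $X^{(a)}=X+a\,\mathrm{id}$ still has unbounded variation, hence is of type~$1$; moreover the assumption that $H$ or $H^{*}$ has a positive drift is a creeping property governed by the Gaussian coefficient, which is unchanged by adding a linear drift, so $X^{(a)}$ still creeps and its ladder-height potential has a density, whence Theorem~\ref{th2} (condition~$4$ and the remark after it) gives $\overline{X^{(a)}}_u\ll\lambda^+$. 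Under the hypothesis $p_t^+\ll\lambda^+$ for all $t$, the shifted semigroup $p^{(a)}_s(dx)=p_s(dx-as)$ is again absolutely continuous for all $s$ (equivalently, by Theorem~\ref{coro2}, the relevant potential measures have densities), so by Theorem~\ref{type} the occupation measure $\mu^{(a),+}_u$, and with it the continuous part of the law of $\overline{X^{(a)}}_u$, is absolutely continuous.

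The hard part is exactly this last reduction — controlling the drifted supremum \emph{uniformly} in $a$, and in particular ruling out an atom at $0$. Indeed, Theorem~\ref{type} shows that the law of $\overline{X^{(a)}}_u$ carries an atom at $0$ precisely when $0$ fails to be regular for $(0,\infty)$ for $X^{(a)}$ (type~$3$), which for a process of bounded variation happens as soon as the total drift $a$ turns sufficiently negative; such an atom would propagate to $M_0$ and hence to $M$. The function of the two hypotheses is to exclude this: in the unbounded-variation case $X^{(a)}$ is of type~$1$ for \emph{every} $a$, so Theorem~\ref{type}(1) applies with no atom and the induction over the pieces closes; in the absolutely-continuous-semigroup case the same must be arranged through Theorem~\ref{type}, after which the piecewise decomposition above shows successively that each $M_k$, then $M$, then (averaging back over $Y$) $\sup_{s\le t}(X+Y)_s$ is absolutely continuous for all $t>0$.
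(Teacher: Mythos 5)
Your argument follows the same route as the paper's proof: condition on $Y$ to reduce to a deterministic path, cut $[0,t]$ at the breakpoints, write the supremum over each piece as an $\mathcal{F}_{T_k}$-measurable variable plus an independent supremum of a drift-modified copy of $X$, observe that adding an independent variable to an absolutely continuous one, and taking finite maxima, preserve absolute continuity, and so reduce everything to the absolute continuity of $\sup_{s\le u}(X_s+as)$ for fixed $a$; Part 1 is then settled by Theorem~\ref{th2} exactly as in the paper. The first genuine defect is in your treatment of the unbounded variation case of Part 2: the claim that positivity of the drift of $H$ or $H^*$ is ``a creeping property governed by the Gaussian coefficient, which is unchanged by adding a linear drift'' is false as a principle --- creeping upward is not equivalent to having a Gaussian component (a bounded variation process with positive drift creeps, and by Vigon's results so do certain pure-jump processes of unbounded variation). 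The invariance you need is a non-trivial cited fact, namely (iv), p.~64 of \cite{do}: for a process of unbounded variation, the ladder height process of $X_t+at$ has a positive drift if and only if that of $X$ does. This is exactly what the paper invokes; your conclusion stands only once you replace your argument by that reference.

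Second, the gap you flag at the end for the case $p_t^+\ll\lambda^+$ is real and your proof does not close it: ``must be arranged through Theorem~\ref{type}'' is not an argument, and nothing in this hypothesis prevents $X_s+as$, $a<0$, from being of type 3, in which case its supremum has an atom at $0$ which contaminates the first piece $M_0$ (the only piece with no independent absolutely continuous summand in front of it). You should know that the paper's own proof has the same gap: it asserts that since the drifted semigroup is still absolutely continuous on $\mathbb{R}_+$, Theorem~\ref{type} makes the drifted suprema absolutely continuous, a step which is valid only for types 1 and 2 and which is never verified. Indeed the step cannot be repaired under the stated hypotheses: take $X$ symmetric $\alpha$-stable with $\alpha\in(0,1)$ --- bounded variation, type 1, $p_t\ll\lambda$ --- and the elementary deterministic path $Y_s=-s$; then $X_s-s$ has bounded variation and drift $-1$, hence is of type 3, so by Theorem~\ref{type} the law of $\sup_{s\le t}(X_s-s)$ has an atom at $0$ and the conclusion of the proposition fails. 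The first alternative of Part 2 thus needs a further assumption (unbounded variation is the natural one) ensuring that every drift modification of $X$ stays of type 1 or 2. Your identification of this as ``the hard part'' is correct; the obstruction lies in the statement and proof as given in this version of the paper, not in your decomposition.
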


Sufficient conditions for the absolute continuity of the
semigroup may be found in Chapter 5 of~\cite{sa} and in
Section 5 of~\cite{ka}. In particular if $\Pi(\mathbb{R})=\infty$ and
$\Pi\ll\lambda$, then $p_t\ll\lambda$ for all $t>0$.
Proposition 20 in Bouleau and Denis~\cite{bd} asserts that under a
slight reinforcement of this condition, for any independent
c\`adl\`ag process $Y$, the law of $\sup_{s\le t}(X+Y)_s$ is absolutely
continuous, provided it has no atom at 0. In the
particular case where $Y$ is an elementary process, this result is a
consequence of part 2 of Proposition~\ref{coro3}.

In view of Theorems~\ref{th2} and~\ref{coro2}, it is natural to look
for instances of L\'evy processes of type 1 such that
the law of $\overline{X}_t$ is absolutely continuous, whereas
$p_t(dx)$ is not, as well as instances of L\'evy processes
of type 1 such that the law of $\overline{X_t}$ is not absolutely continuous.
The following corollary is inspired from Orey's example~\cite{or}; see
also~\cite{sa}, Exercise 29.12 and Example~41.23.

\begin{corollary}\label{counterexample} Let $X$ be a L\'evy process
whose characteristic exponent~$\psi$, that is, $\e(e^{i\lambda
X_t})=e^{-t\psi(\lambda)}$
is given by
\[
\psi(\lambda)=\int_{\mathbb{R}}\bigl(1-e^{i\lambda x}+i\lambda x\ind_{\{
|x|<1\}}\bigr)\Pi(dx).
\]
Let $\alpha\in(1,2)$ and $c$ be an integer such that $c>2/(2-\alpha)$,
and set $a_n=2^{-c^n}$.
\begin{enumerate}[(1)]
\item[(1)] If $\Pi(dx)=\sum_{n=1}^\infty a_n^{-\alpha}\delta
_{-a_n}(dx)$, then $X$ is of type $1$, and
for all $t>0$, the law of
$\overline{X}_t$ is absolutely continuous, whereas $p_t(dx)$ is
continuous singular.
\item[(2)] If $\Pi(dx)=\sum_{n=1}^\infty a_n^{-\alpha}(\delta
_{-a_n}(dx)+\delta_{a_n}(dx))$, then $X$ is of type $1$, and
for all $t>0$, the law $\overline{X}_t$ is not absolutely continuous.
\end{enumerate}
\end{corollary}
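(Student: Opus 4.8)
The strategy is to exploit Orey's construction, in which the driving L\'evy measure is an atomic measure supported on a rapidly decreasing geometric sequence $a_n=2^{-c^n}$, and to route the absolute-continuity questions through Theorems~\ref{th2} and \ref{coro2}. For both parts I would first verify that $X$ is of type $1$: since $\Pi$ charges arbitrarily small negative jumps with $\sum a_n^{-\alpha}=\infty$ and $\int_{|x|<1}|x|\,\Pi(dx)=\sum a_n^{1-\alpha}=\infty$ (because $\alpha>1$), the process has paths of unbounded variation, and a process of unbounded variation is regular for both halflines; in part~$2$ the symmetry makes this immediate, while in part~$1$ the one-sided (spectrally negative) case still has unbounded variation and hence $0$ is regular for $(-\infty,0)$ and, by the unbounded variation criterion, for $(0,\infty)$ as well. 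Thus Theorems~\ref{th2} and \ref{coro2} apply.

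For part~$1$, the two claims are that $\overline{X}_t$ is absolutely continuous while $p_t$ is continuous singular. By the equivalence $1\Leftrightarrow 2\Leftrightarrow 3$ in Theorem~\ref{th2}, absolute continuity of the law of $\overline{X}_t$ is equivalent to $U(dx)\ll\lambda$. I would establish the latter by the Fourier criterion \eqref{intest}: one computes $\psi(\lambda)$ for the given one-sided $\Pi$, estimates $\Re\bigl(1/(1+\psi(\lambda))\bigr)$ for large $|\lambda|$, and checks that the real part of $\psi$ grows fast enough (like $|\lambda|^\alpha$ up to the lacunary corrections coming from the scales $a_n$) that the integral converges; the arithmetic condition $c>2/(2-\alpha)$ is exactly what guarantees that the gaps between successive scales $a_n^{-1}=2^{c^n}$ are wide enough for the estimate to go through. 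For the singularity of $p_t$ I would invoke Orey's original analysis (as in \cite{sa}, Example~41.23): the same lacunarity condition $c>2/(2-\alpha)$ forces $\limsup_{|\lambda|\to\infty}\Re\,\psi(\lambda)/\log|\lambda|<\infty$ fails in the way needed, so that $\widehat{p_t}(\lambda)=e^{-t\psi(\lambda)}$ does not tend to $0$ along a suitable subsequence, whence by the Riemann--Lebesgue lemma $p_t$ cannot have an integrable density; combined with the fact that $p_t$ is continuous (Theorem~27.4 in \cite{sa}), this yields continuous singular.

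For part~$2$, the symmetric choice of $\Pi$ makes $X$ a symmetric process, so the Fuhrman--Kanda-type result quoted after Theorem~\ref{th2} tells us that $U\ll\lambda$ would force $p_t\ll\lambda$ for all $t$. I would therefore argue contrapositively: the symmetric version of Orey's example is arranged (again via $c>2/(2-\alpha)$) so that $p_t$ is \emph{not} absolutely continuous, and hence by the symmetric equivalence $U(dx)$ is not absolutely continuous either; Theorem~\ref{th2} then gives that the law of $\overline{X}_t$ is not absolutely continuous, for all $t>0$. Here one must check that symmetrizing the spectral measure does not destroy the singularity estimate for $p_t$, which is routine since the Fourier transform only picks up a factor of $2$ in the real part of $\psi$.

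The main obstacle is the Fourier estimate itself: carefully controlling $\Re\bigl(1/(1+\psi(\lambda))\bigr)$ over the lacunary frequency scales and pinning down the exact role of the condition $c>2/(2-\alpha)$ in separating the two cases (convergence of \eqref{intest} in part~$1$ versus persistence of $\widehat{p_t}$ away from $0$ in both parts). The remaining verifications---type~$1$, unbounded variation, continuity of $p_t$, and the symmetric transfer via \cite{fu}---are comparatively mechanical once the growth of $\psi$ along these scales is understood.
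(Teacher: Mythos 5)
Your type-1 verification (unbounded variation from $\int_{|x|<1}|x|\,\Pi(dx)=\sum_n a_n^{1-\alpha}=\infty$, then regularity of both halflines) matches the paper, and your part 2 is an acceptable variant: the paper simply cites Example 41.23 of \cite{sa} for the failure of $U\ll\lambda$ and concludes by Theorem \ref{th2}, while you reconstruct that failure through Fukushima's theorem \cite{fu} (for symmetric $X$, $U\ll\lambda$ forces $p_t\ll\lambda$) combined with the Orey-type non-decay of $e^{-t\psi}$; that chain is sound provided the lacunary estimate is actually carried out.

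Part 1, however, contains two genuine gaps. First, your route to the absolute continuity of $\overline{X}_t$ --- verifying $(\ref{intest})$ on the grounds that ``the real part of $\psi$ grows fast enough (like $|\lambda|^\alpha$)'' --- rests on a premise that is false and in fact contradicts your own singularity claim. In Orey's construction $\Re\,\psi(\lambda)=\sum_n a_n^{-\alpha}(1-\cos(\lambda a_n))$ stays \emph{bounded} along the frequencies $\lambda_k=2\pi/a_k$: for $n\le k$ the terms vanish because $\lambda_k a_n=2\pi\, 2^{c^k-c^n}$ is a multiple of $2\pi$ ($c$ being an integer), and for $n>k$ the condition $c>2/(2-\alpha)$ makes the tail $\sum_{n>k}2^{2c^k-(2-\alpha)c^n}$ negligible. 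This boundedness is precisely what makes $e^{-t\psi}$ fail to vanish at infinity; so if $(\ref{intest})$ holds here at all, it can only be because the \emph{imaginary} part of $\psi$ (the process is one-sided, hence $\psi$ is far from real) is large at those frequencies --- a delicate estimate you have not supplied. The paper avoids Fourier analysis entirely by the structural observation you missed: in part 1, $\Pi$ charges only $(-\infty,0)$, so $X$ has no positive jumps, hence the ladder height process $H$ is a pure drift; by the remark following Theorem \ref{th2}, condition 4 there ($V(dx)\ll\lambda^+$) holds trivially, and Theorem \ref{th2} yields absolute continuity of the law of $\overline{X}_t$ for all $t>0$. Second, your step ``$p_t$ is atomless and not absolutely continuous, hence continuous singular'' is not valid: an atomless measure may have a nontrivial Lebesgue decomposition with both an absolutely continuous and a singular part, so no-atoms plus not-a.c.\ does not give singularity. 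The paper needs, and invokes, the Hartman--Wintner purity theorem (Theorem 27.16 in \cite{sa}): since $\Pi$ is discrete with infinite total mass, $p_t$ is either absolutely continuous or continuous singular, and the Riemann--Lebesgue argument rules out the former.
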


We end this section with the case of compound Poisson processes. Recall
that any such process can be expressed as:
\[
X_t=S_{N_t},\qquad t\ge0,
\]
where $S_0=0$, $S_n=\sum_{k=1}^nX_k$, $n\ge1$, $(X_k)_{k\ge1}$ are
i.i.d.~random variables, and $(N_t)_{t\ge0}$ is a Poisson process
with any intensity, which is independent from the sequence $(X_k)_{k\ge
1}$. We keep the same notation for the measures
$p_t^+$, $\mu_t^+$ and $U^+$, which are defined with respect to $X$, as
before. We denote by $\upsilon^+$ the restriction to
$(\mathbb{R}_+,\mathcal{ B}_{\mathbb{R}_+})$ of the potential
measure of the random walk $(S_n)_{n\ge0}$, that is,
\[
\upsilon^+(A)=\sum_{n=0}^\infty\p(S_n\in A),\qquad A\in\mathcal{
B}_{\mathbb{R}_+}.
\]

\begin{theorem}\label{new} Let $X$ be a compound Poisson process. Then
for all $t>0$, the measures
\[
\p(\overline{X}_t\in dx),\qquad p_t^+(dx),\qquad \mu_t^+(dx),\qquad
U^+(dx)\quad \mbox{and}\quad \upsilon^+(dx)
\]
are equivalent.
\end{theorem}

As a consequence, when $X$ is a compound Poisson process, for
any $t>0$ and $t'>0$,
the laws of $\overline{X}_t$ and $\overline{X}_{t'}$ are equivalent.
This question is still open in the general case.

%s4 ###
\section{An expression for the joint law of $(g_t,\overline
{X}_t,X_t)$}\label{expressions}

In this section, we assume that $|X|$ is not subordinator and that $X$
is not a compound Poisson process.

The following theorem presents a path decomposition of the L\'evy
process $X$, over the interval $[0,t]$, at time $g_t$.
More specifically, it states that conditionally on $g_t=s$, the
returned pre-$g_t$ part and the post-$g_t$ part are
distributed according to the laws $n^*( \cdot| s<\zeta)$ and $n(
\cdot| t-s<\zeta)$, respectively.
Actually, we will essentially focus on its corollaries
which provide some representations of the joint law of $g_t$,
$\overline{X}_t$ and $X_t$, at a fixed time $t$, in terms of the
entrance laws $(q_s)$ and $(q^*_s)$. Besides
they will be applied in Section~\ref{proofs} for the proofs of the
results of Section~\ref{main}.

For $\omega\in\mathcal{ D}$ and $s\ge0$, we set $\Delta^\pm_s(\omega
)=(\omega_s-\omega_{s-})^\pm$,
where $\omega_{0-}=\omega_0$. Then we define the (special) \textit{shift}
operator by
\[
\theta_s(\omega)=\bigl(\omega_{s-}-\omega_{s+u}+\Delta_s^+(\omega),
u\ge0\bigr).
\]
The \textit{killing} operator and the \textit{return} operator are
respectively defined as
follows:
\begin{eqnarray*}
k_s(\omega)&=&\cases{
\omega_u,&\quad $0\le u<s,$\vspace*{2pt}\cr
\omega_s,& \quad $u\ge s$,}
\\
r_s(\omega)&=&\cases{
\omega_s-\omega_{(s-u)-}-\Delta_s^-(\omega),& \quad $0\le u<s,$\vspace*{2pt}\cr
\omega_s-\omega_{0}-\Delta_s^-(\omega),&\quad $u\ge s.$}
\end{eqnarray*}
We also denote by $\omega^0$ the path which is identically equal to 0.
\begin{theorem}\label{mainth}
Fix $t>0$, let $f$ be any bounded Borel function and\break let $F$ and~$K$ be
any bounded Borel functionals
which are defined on the space~$\mathcal{D}$.\looseness=1
\begin{enumerate}[(1)]
\item[(1)] If $X$ is of type $1$, then
%
%
%e1 ###
\begin{eqnarray}\label{1a}
&&\e\bigl(f(g_t)\cdot F\circ r_{g_t}\cdot K\circ k_{t-g_t}\circ\theta
_{g_t}\bigr)
\nonumber
\\[-8pt]
\\[-8pt]
\nonumber
&&\qquad =
 \int_0^tf(s)n^*(F\circ k_s,s<\zeta)n(K\circ
k_{t-s},t-s<\zeta) \,ds.
\end{eqnarray}
\item[(2)] If $X$ is of type $2$, then
%
%
%e2 ###
\begin{eqnarray}\label{2a} %\label{pos}
&&\e\bigl(f(g_t)\cdot F\circ r_{g_t}\cdot K\circ k_{t-g_t}\circ\theta
_{g_t}\bigr)\nonumber\\
&&\qquad=
 \int_0^tf(s)n^*(F\circ k_s,s<\zeta)n(K\circ
k_{t-s},t-s<\zeta) \,ds\\
&&\qquad\quad{} +\mathtt{d} f(t)n^*(F\circ k_t,t<\zeta)K(\omega
^0).\nonumber
\end{eqnarray}
\item[(3)] If $X$ is of type $3$, then
%
%
%e3 ###
\begin{eqnarray}\label{3a} %\label{neg}
&&\e\bigl(f(g_t)\cdot F\circ r_{g_t}\cdot K\circ k_{t-g_t}\circ\theta
_{g_t}\bigr)\nonumber\\
&&\qquad =
 \int_0^tf(s)n^*(F\circ k_s,s<\zeta)n(K\circ
k_{t-s},t-s<\zeta) \,ds\\
&&\qquad\quad{} +\mathtt{d}^*f(0)F(\omega^0)n(K\circ k_t,t<\zeta
).\nonumber
\end{eqnarray}
\end{enumerate}
\end{theorem}

Simultaneously to our work, a similar path decomposition has
been obtained in~\cite{ya}, when $X$ is of type 1.
In the later work, the post-$g_t$ part of $(X_s, 0\le s\le t)$ is
expressed in terms of the law of the meander, that is, $\mathbb
{M}^{(t)}=n( \cdot| t<\zeta)$; see Theorem 5.1.

By applying Theorem~\ref{mainth} to the joint law of $g_t$, together
with the terminal values of the pre-$g_t$
and the post-$g_t$ parts of $(X_s, 0\le s\le t)$, we obtain the
following representation for the law of the triple:
$(g_t,\overline{X}_t,X_t)$. Moreover, when $\lim_{t\rightarrow\infty
}X_t=-\infty$, a.s., we define $\overline{X}_\infty=\sup_tX_t$,
the overall supremum of $X$ and $g_\infty=\sup\{t:X_t=\overline
{X}_\infty\mbox{ or } X_{t-}=\overline{X}_\infty\}$,
the location of this supremum. Then we obtain
the same kind of representation for $(g_\infty,\overline{X}_\infty)$.
We emphasize that in the next result, as well as in Corollaries
\ref{law1} and~\ref{semigroup}, at least one of the drift coefficients
$\mathtt{d}$ and $\mathtt{d}^*$ is zero.
\begin{theorem}\label{law}
The law of $(g_t,\overline{X}_t,X_t)$ fulfills the following representation:
%
%
%e4 ###
\begin{eqnarray}\label{both}
&&\p(g_t\in ds,\overline{X}_t\in dx,\overline{X}_t-X_t\in
dy)
\nonumber
\\
&&\qquad=q_s^*(dx)q_{t-s}(dy)\ind_{[0,t]}(s) \,ds
 +\mathtt{d} \delta_{\{t\}}(ds)q_t^*(dx)\delta_{\{0\}
}(dy)\\
&&\quad\qquad{} +\mathtt{d}^*\delta_{\{0\}}(ds)\delta_{\{0\}
}(dx)q_t(dy).\nonumber
\end{eqnarray}
If moreover $\lim_{t\rightarrow\infty}X_t=-\infty$, a.s., then
%
%
%e5 ###
\begin{equation}\label{infty}
\p(g_\infty\in ds,\overline{X}_\infty\in dx)=a q_s^*(dx) \,ds+\mathtt{d}^*a\delta_{\{(0,0)\}}(ds,dx),
\end{equation}
where $a$ is the killing rate of the ladder time process $\tau$.
\end{theorem}

We derive from Theorem~\ref{law} that when $X$ is of type 1,
the law of the time $g_t$ is equivalent to
the Lebesgue measure, with density $s\mapsto n^*(s<\zeta)n(t-s<\zeta
)\ind_{[0,t]}(s)$.
This theorem illustrates the importance of the entrance laws $q_t$ and
$q_t^*$ for the
computation of some distributions involved in fluctuation theory. We
give below a couple of examples where some explicit forms
can be obtained for $q_t$, $q_t^*$ and the law of $(g_t,\overline
{X}_t,X_t)$. When $q_t(dx)\ll\lambda^+$ [resp., $q^*_t(dx)\ll\lambda^+$],
we will denote by $q_t(x)$
[resp., $q_t^*(x)$] the density of $q_t(dx)$ [resp., $q_t^*(dx)$].

\begin{example}\label{ex1} Suppose that $X$ is a Brownian motion with
drift, that is, $X_t=B_t+ct$, where $B$ is the standard Brownian motion\vadjust{\goodbreak}
and $c\in\mathbb{R}$. We derive for instance from
Lemma~\ref{equivalence1} in Section~\ref{proofs} that
\[
q_t(dx)=\frac x{\sqrt{\pi t^3}} e^{-(x-c)^2/2t} \,dx \quad\mbox{and}\quad
q_t^*(dx)=\frac x{\sqrt{\pi t^3}} e^{-(x+c)^2/2t} \,dx.
\]
Then expression (\ref{both}) in Theorem~\ref{law} allows us to compute
the law of the triple $(g_t,\overline{X}_t,X_t)$.
\end{example}

\begin{example}\label{ex2} Recently, the density of the measure
$q_t(dx)$ for the symmetric Cauchy process
has been computed in~\cite{ma}.
\begin{eqnarray*}
q_t(x)&=&q^*_t(x)\\
&=&\sqrt{2}\frac{\sin(\pi/8+3/2\arctan
( x/t))}{(t^2+x^2)^{3/4}}\\
&&{} -\frac1{2\pi}\int_0^\infty\frac{y}{(1+y^2)(xy+t)^{3/2}}
\exp\biggl(-\frac1\pi\int_0^\infty\frac{\log(y+s)}{1+s^2} \,ds\biggr)
\,dy.
\end{eqnarray*}
As far as we know, this example, together with the case of Brownian
motion with drift (Example~\ref{ex1}), are the only instances of L\'evy
processes where
the measures $q_t(dx)$, $q^*_t(dx)$ and the law of the triplet
$(g_t,\overline{X}_t,X_t)$ can be computed explicitly.
\end{example}

\begin{example}\label{ex3} Recall from (\ref{pi}) that $q_t(\mathbb
{R}_+)=n(t<\zeta)$ and $q_t^*(\mathbb{R}_+)=n^*(t<\zeta)$, so that
we can derive from Theorem~\ref{law}, all possible marginal laws in the
triplet $(g_t,\overline{X}_t,X_t)$. In particular, when $X$ is stable,
the ladder time process $\tau$ also satisfies the scaling property with
index $\rho=\p(X_1\ge0)$, so we derive from the normalization
$\kappa(1,0)=1$ and (\ref{pi}) that $n(t<\zeta)=t^{-\rho}/\Gamma
(1-\rho
)$. Moreover $q_t^*$ and $q_t$ are absolutely
continuous in this case (it can be derived, e.g., from part 4 of
Lemma~\ref{equivalence1} in the next section).
Then a consequence of (\ref{both}) is the following form of the joint
law of $(g_t,\overline{X}_t)$:
%
%
%e6 ###
\begin{equation}\label{lawstable}
\p(g_t\in ds,\overline{X}_t\in dx)=\frac{(t-s)^{-\rho}}{\Gamma
(1-\rho
)}\ind_{[0,t]}(s) q_s^*(x) \,ds \,dx.
\end{equation}
Note that this computation is implicit in~\cite{ac}; see Corollary 3
and Theorem~5. A~more explicit form is given in (\ref{lawsn}),
after Proposition~\ref{plsn}, in the case where the process has no
positive jumps. Note also that when $X$ is stable,
the densities $q_t$ and $q_t^*$ satisfy the scaling properties
\[
q_t(y)=t^{-\rho-1/\alpha}q_1(t^{-1/\alpha}y) \quad\mbox{and}\quad
q_t^*(x)=t^{\rho-1-1/\alpha}q_1^*(t^{-1/\alpha}x).
\]
These properties together with Theorem~\ref{law} imply that the three
r.v.s $g_t$,
$\overline{X}_t/g_t^{1/\alpha}$ and $(\overline
{X}_t-X_t)/(t-g_t)^{1/\alpha}$ are independent and have densities
\[
\frac{\sin(\pi\rho)}{\pi}s^{\rho-1}(t-s)^{-\rho}\ind_{[0,t]}(s),\qquad
\Gamma(\rho)q^*_1(x) \quad\mbox{and}\quad \Gamma(1-\rho)q_1(y),
\]
respectively. The independence between $g_t$, $\overline
{X}_t/g_t^{1/\alpha}$ and $(\overline{X}_t-X_t)/\break(t-g_t)^{1/\alpha}$
has recently been proved in Proposition 2.39 of~\cite{co}.
\end{example}

It is clear that an expression for the law of $\overline
{X}_t$ follows directly from Theorem~\ref{law} by integrating (\ref{both})
over $s$ and $y$. However, for convenience in the proofs of
Section~\ref{proofs}, we write it here
in a proper statement. An equivalent version of Corollary~\ref{law1}
may also be found in~\cite{ds}, Lemma 6.\vspace*{-3pt}

\begin{corollary}\label{law1} The law of $\overline{X}_t$ fulfills the
following representation:
%
%
%e7 ###
\begin{equation}\label{both1}
\p(\overline{X}_t\in dx)=\int_0^tn(t-s<\zeta)q_s^*(dx) \,ds+\mathtt{d}
q_t^*(dx)+\mathtt{d}^*n(t<\zeta)\delta_{\{0\}}(dx).\hspace*{-35pt}\vspace*{-3pt}
\end{equation}
\end{corollary}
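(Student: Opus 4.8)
The plan is to obtain the law of $\overline{X}_t$ as a marginal of the joint law of the triple $(g_t,\overline{X}_t,\overline{X}_t-X_t)$ already established in Corollary \ref{law}. Concretely, I would start from the representation (\ref{both}) and integrate out the first coordinate, namely the time variable $s$ carrying $g_t$, over $[0,\infty)$, together with the third coordinate, namely the variable $y$ carrying $\overline{X}_t-X_t$, over $\mathbb{R}_+$; what remains is precisely the marginal in the variable $x$ that carries $\overline{X}_t$. (Note that $t$ being fixed, it is the variables $s$ and $y$ that are integrated out, not $t$.)

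Since the three terms on the right-hand side of (\ref{both}) are all nonnegative measures, Tonelli's theorem permits integrating term by term and in either order. For the first term $q_s^*(dx)q_{t-s}(dy)\ind_{[0,t]}(s)\,ds$, integrating first over $y\in\mathbb{R}_+$ involves only the total mass of the entrance law $q_{t-s}$, which by (\ref{pi}) equals $n(t-s<\zeta)$; this leaves $n(t-s<\zeta)q_s^*(dx)\ind_{[0,t]}(s)\,ds$, and integrating over $s$ produces the leading term $\int_0^t n(t-s<\zeta)\,q_s^*(dx)\,ds$. For the second term ${\tt d}\,\delta_{\{t\}}(ds)q_t^*(dx)\delta_{\{0\}}(dy)$, the Dirac masses in $s$ and in $y$ each integrate to total mass $1$, leaving exactly ${\tt d}\,q_t^*(dx)$. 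For the third term ${\tt d}^*\delta_{\{0\}}(ds)\delta_{\{0\}}(dx)q_t(dy)$, integrating the Dirac mass in $s$ contributes a factor $1$, while integrating $q_t(dy)$ over $\mathbb{R}_+$ again yields $q_t(\mathbb{R}_+)=n(t<\zeta)$ by (\ref{pi}), leaving ${\tt d}^*n(t<\zeta)\delta_{\{0\}}(dx)$. Summing the three contributions gives exactly (\ref{both1}).

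There is essentially no hard step here: the statement is a purely measure-theoretic consequence of Corollary \ref{law}, the only inputs beyond Tonelli's theorem being the total-mass identities for the entrance laws recorded in (\ref{pi}). The one point meriting a word of care is the bookkeeping of atoms: as emphasized before Corollary \ref{law}, at most one of the drift coefficients ${\tt d}$ and ${\tt d}^*$ is nonzero, so in the type~1 case both correction terms vanish, in the type~2 case only the ${\tt d}$ term survives, and in the type~3 case only the ${\tt d}^*$ term survives, consistently with the single atom of $\overline{X}_t$ at $0$ identified in Theorem \ref{type}. Alternatively, one could bypass Corollary \ref{law} and read (\ref{both1}) off directly from Theorem \ref{mainth} by taking $K\equiv1$, $F(\omega)=g(\omega_\zeta)$ and $f\equiv1$: then $n(K\circ k_{t-s},t-s<\zeta)=n(t-s<\zeta)$ supplies the weight in the integral, $n^*(F\circ k_s,s<\zeta)=\int g(x)\,q_s^*(dx)$ supplies the $q_s^*$ factor, and the boundary terms ${\tt d}\,n^*(F\circ k_t,t<\zeta)$ and ${\tt d}^*g(0)\,n(t<\zeta)$ reproduce the same two correction terms, giving $\e(g(\overline{X}_t))$ in the form (\ref{both1}).
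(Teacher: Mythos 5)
Your proof is correct and is essentially the paper's own argument: the paper obtains Corollary \ref{law1} precisely by integrating the joint law (\ref{both}) of Corollary \ref{law} over the time variable and over $y$, using the total-mass identities (\ref{pi}), exactly as you do (and your parenthetical remark correctly identifies that the paper's phrase ``integrating over $t$ and $y$'' is a slip for integrating over $s$ and $y$). Nothing further is needed.
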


Another remarkable, and later useful, direct consequence of
Theorem~\ref{law} is the following
representation of the semigroup of $X$ in terms of the entrance laws
$(q_s)$ and $(q_s^*)$.\vspace*{-3pt}

\begin{corollary}\label{semigroup} Let us denote the measure $q_t(-dx)$
by $\overline{q}_t(dx)$. We extend the measures
$\overline{q}_t(dx)$ and $q_t^*(dx)$ to $\mathbb{R}$ by setting
$\overline{q}_t(A)=\overline{q}_t(A\cap\mathbb{R}_-)$ and
$q_t^*(A)=q_t^*(A\cap\mathbb{R}_+)$, for any Borel set $A\subset
\mathbb{R}$. Then we have the following identity between measures
on $\mathbb{R}$:
%
%
%e8 ###
\begin{equation}\label{sgent}
p_t=\int_0^t\overline{q}_s*q_{t-s}^* \,ds+\mathtt{d} q_t^*+\mathtt{d}^*\overline{q}_t.\vspace*{-3pt}
\end{equation}
\end{corollary}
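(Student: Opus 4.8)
The plan is to read off the law of $X_t$ from the joint law of $(g_t,\overline{X}_t,\overline{X}_t-X_t)$ furnished by Corollary \ref{law}, exploiting the trivial identity $X_t=\overline{X}_t-(\overline{X}_t-X_t)$. Concretely, I would fix a bounded Borel function $f$ on $\mathbb{R}$ and apply the representation (\ref{both}) to the bounded Borel functional $(s,x,y)\mapsto f(x-y)$. The left-hand side then becomes $\e(f(X_t))=\int_{\mathbb{R}}f\,dp_t$, while the right-hand side splits into the contributions of the three terms in (\ref{both}), which I would treat one at a time.

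For the absolutely continuous term I would compute $\int_0^t\int_{[0,\infty)}\int_{[0,\infty)}f(x-y)\,q_s^*(dx)\,q_{t-s}(dy)\,ds$. Performing the change of variable $s\mapsto t-s$ in the outer integral rewrites this as $\int_0^t\int\int f(x-y)\,q_{t-s}^*(dx)\,q_s(dy)\,ds$. Since by definition $\overline{q}_s(dx)=q_s(-dx)$ is the image of $q_s$ under $y\mapsto -y$ and is supported on $\mathbb{R}_-$, writing $x-y=x+(-y)$ identifies the inner double integral with $\int f\,d(\overline{q}_s*q_{t-s}^*)$, the convolution being taken on $\mathbb{R}$ and the extension conventions of the statement guaranteeing that the supports match up. Hence this term equals $\int_{\mathbb{R}}f\,d\!\left(\int_0^t\overline{q}_s*q_{t-s}^*\,ds\right)$.

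For the two atomic terms the computation is immediate. The ${\tt d}$-term, carried by $s=t$ and $y=0$, yields ${\tt d}\int_{[0,\infty)}f(x)\,q_t^*(dx)=\int_{\mathbb{R}}f\,d({\tt d}\,q_t^*)$, using that $q_t^*$ is extended to $\mathbb{R}$ with support in $\mathbb{R}_+$; the ${\tt d}^*$-term, carried by $s=0$ and $x=0$, yields ${\tt d}^*\int_{[0,\infty)}f(-y)\,q_t(dy)=\int_{\mathbb{R}}f\,d({\tt d}^*\overline{q}_t)$, again by $\overline{q}_t(dx)=q_t(-dx)$. Summing the three contributions gives $\int_{\mathbb{R}}f\,dp_t=\int_{\mathbb{R}}f\,d\!\left(\int_0^t\overline{q}_s*q_{t-s}^*\,ds+{\tt d}\,q_t^*+{\tt d}^*\overline{q}_t\right)$ for every bounded Borel $f$, and since two finite measures that agree against all such $f$ coincide, the identity (\ref{sgent}) follows.

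The only delicate point is the bookkeeping in the first term: one must match the direction of the convolution, apply the time substitution $s\mapsto t-s$, and translate the reflection $\overline{q}_s(dx)=q_s(-dx)$ correctly, verifying that extending $q_t^*$ to $\mathbb{R}_+$ and $\overline{q}_t$ to $\mathbb{R}_-$ makes the convolution on $\mathbb{R}$ coincide with the integral of $f(x-y)$ against the original halfline measures. No analytic subtlety is involved; everything reduces to Fubini's theorem and an elementary change of variables.
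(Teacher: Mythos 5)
Your proof is correct and is precisely the argument the paper intends: the paper states Corollary \ref{semigroup} as a direct consequence of Corollary \ref{law}, i.e.\ one integrates the identity (\ref{both}) against $f(x-y)$ using $X_t=\overline{X}_t-(\overline{X}_t-X_t)$, which is exactly your computation, including the change of variables $s\mapsto t-s$ and the reflection $\overline{q}_s(dx)=q_s(-dx)$.
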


Now we turn to the particular case where $X$ has no positive jumps.
Then, 0 is always regular for $(0,\infty)$.
When moreover 0 is regular for $(-\infty,0)$, since $H_t\equiv t$, it
follows from Theorem~\ref{th2} and the remark thereafter that
the law of $\overline{X}_t$ is absolutely continuous. In the next
result, we present
an explicit form of its density. We set $c=\Phi(1)$, where $\Phi$ is
the Laplace exponent of
the first passage process $T_x=\inf\{t:X_t>x\}$, which in this case, is
related to the ladder time process by $T_x=\tau_{cx}$.\vspace*{-3pt}

\begin{proposition}\label{plsn} Suppose that the L\'evy process $X$ has
no positive jumps.
\begin{enumerate}[(1)]
\item[(1)] If $0$ is regular for $(-\infty,0)$, then for $t>0$, the
couple $(g_t,\overline{X}_t)$ has law
%
%
%e10 ###
%e9 ###
\begin{eqnarray}
\p(g_t\in ds,\overline{X}_t\in dx)&=&cxp_s^+(dx)n(t-s<\zeta
)s^{-1}\ind
_{(0,t]}(s) \,ds\label{0528}\\
&=&cn(t-s<\zeta)\ind_{(0,t]}(s)\p(\tau_{cx}\in ds) \,dx.\label{7520}
\end{eqnarray}
In particular, the density of the law of $\overline{X}_t$ is given by
the function
\[
x\mapsto\int_0^tcn(t-s<\zeta)\p(\tau_{cx}\in ds).
\]
\item[(2)] If $0$ is not regular for $(-\infty,0)$, then for all $t>0$,
%
%
%e11 ###
\begin{eqnarray}\label{1528}
\p(g_t\in ds,\overline{X}_t\in dx)&=&cxn(t-s<\zeta)s^{-1}\ind
_{(0,t]}(s)p^+_s(dx) \,ds
\nonumber
\\[-8pt]
\\[-8pt]
\nonumber
&&{}+
\mathtt{d}cxt^{-1}p_t^+(dx)\delta_{\{t\}}(ds).
\end{eqnarray}
Moreover, we have the following identity between measures on $[0,\infty)^3$:
%
%
%e12 ###
\begin{eqnarray}\label{7620}
\p(g_t\in ds,\overline{X}_t\in dx) \,dt&=&cn(t-s<\zeta)\ind
_{(0,t]}(s)\p
(\tau_{cx}\in ds) \,dx \,dt
\nonumber
\\[-8pt]
\\[-8pt]
\nonumber
&&{}+\mathtt{d}c\p(\tau_{cx}\in \,dt)\delta_{\{t\}}(ds) \,dx.
\end{eqnarray}
\end{enumerate}
\end{proposition}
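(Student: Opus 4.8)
The plan is to start from the general representation (\ref{both}) in Corollary \ref{law} and specialize it to the spectrally negative case. Since $X$ has no positive jumps, the ascending ladder height process $H$ is a pure drift, $H_t\equiv t$, and the first passage process satisfies $T_x=\tau_{cx}$ with $c=\Phi(1)$. The first observation I would record is that because $X$ creeps upward, the reflected excursions at the maximum under $n$ never jump across a level, so the entrance law $q_{t-s}(dy)$ at the minimum enters (\ref{both}) only through its total mass $n(t-s<\zeta)$ once we integrate out the third coordinate $y$. Concretely, integrating (\ref{both}) in $y$ over $[0,\infty)$ and using $\int_0^\infty q_{t-s}(dy)=n(t-s<\zeta)$ from (\ref{pi}) collapses the joint law of $(g_t,\overline{X}_t,\overline{X}_t-X_t)$ to the law of $(g_t,\overline{X}_t)$, which already yields the factor $n(t-s<\zeta)$ appearing in (\ref{0528}) and (\ref{1528}).

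The substantive step is to identify the entrance law $q_s^*(dx)$ of the reflected process at the infimum with an explicit expression involving the semigroup restricted to $\mathbb{R}_+$. Here I would invoke the relationship between the descending ladder structure and $p_s^+$ coming from fluctuation theory for spectrally negative processes. The key identity I expect to use is that for such processes the entrance law at the minimum is given by $q_s^*(dx)=cx\,s^{-1}p_s^+(dx)$; this is essentially a Kendall-type identity, reflecting that the ladder time process $\tau$ is the inverse of the first passage process and that $\p(\tau_{cx}\in ds)=cxs^{-1}p_s^+(dx)$, the hitting-time/occupation density duality valid when there are no positive jumps. Substituting this form of $q_s^*$ into the collapsed representation produces (\ref{0528}) directly, and the alternative form (\ref{7520}) follows by rewriting $cxs^{-1}p_s^+(dx)\,ds$ as $c\,\p(\tau_{cx}\in ds)\,dx$ via that same Kendall identity, now read with $x$ as the spatial variable and $s$ as time. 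The density of $\overline{X}_t$ claimed at the end of part 1 is then obtained by integrating (\ref{7520}) in $s$ over $(0,t]$.

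For part 2, the process is of type 2 (0 is regular for $(0,\infty)$ since there are no positive jumps, but not for $(-\infty,0)$), so the second term $\mathtt{d}\,\delta_{\{t\}}(ds)\,q_t^*(dx)\,\delta_{\{0\}}(dy)$ in (\ref{both}) is now present while the third term vanishes because $\mathtt{d}^*=0$. Carrying out the same integration in $y$ and the same substitution $q_t^*(dx)=cxt^{-1}p_t^+(dx)$ for the atom at $s=t$ yields (\ref{1528}); the measure identity (\ref{7620}) on $[0,\infty)^3$ then follows by multiplying through by $dt$ and again translating $p^+$ into $\tau$ through the Kendall identity, exactly as in part 1.

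The main obstacle will be establishing rigorously the identity $q_s^*(dx)=cx\,s^{-1}p_s^+(dx)$, i.e.\ tying the entrance law of the reflected excursion at the infimum to the killed semigroup. I would expect to justify it by appealing to the announced Lemma \ref{equivalence1} (the equivalence between the law of $X_t$ on $\mathbb{R}_+$ and the excursion entrance law), specialized to the no-positive-jumps case where creeping makes the relation an exact equality with the explicit factor $cx/s$ rather than a mere equivalence. The bookkeeping of which drift term survives in each regularity case is routine once Corollary \ref{law} and the normalization $\kappa(1,0)=1$ (which fixes $c=\Phi(1)$ through $T_x=\tau_{cx}$) are in hand.
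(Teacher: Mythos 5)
Your proposal follows essentially the same route as the paper's proof: integrate (\ref{both}) over $y$ to collapse to $(g_t,\overline{X}_t)$, identify $q_s^*(dx)=cxs^{-1}p_s^+(dx)$ via Lemma \ref{equivalence1} (the paper does this through identity (\ref{2681}) together with $\ell_x=cx$, which comes from $L_t=c\overline{X}_t$ in the spectrally negative case), and pass to the $\tau$-forms by the Kendall-type identity, which the paper obtains by integrating (\ref{4594}) over $u$. The only blemish is a factor-of-$c$ slip in your first statement of that identity --- the correct form is $xs^{-1}p_s^+(dx)\,ds=\p(\tau_{cx}\in ds)\,dx$, not $cxs^{-1}p_s^+(dx)\,ds=\p(\tau_{cx}\in ds)\,dx$ --- but you apply the correct version where it actually matters.
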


\begin{example} Using the series development (14.30), page
88 in~\cite{sa} for $p_s^+(dx)$,
we derive from (\ref{0528}) in Proposition~\ref{plsn}, the following
reinforcement of expression (\ref{lawstable}). When $X$
is stable and spectrally negative, the density of $(g_t,\overline
{X}_t)$ is given by
%
%
%e13 ###
\begin{eqnarray}\label{lawsn}
\qquad\frac{c}{\pi\Gamma({(\alpha-1)}/\alpha)(t-s)^{1/\alpha
}}\sum_{n=1}^\infty
\frac{\Gamma(1+n/\alpha)}{n!}\sin\biggl(\frac{\pi n}\alpha
\biggr)s^{-{(n+\alpha)}/\alpha}x^n,
\nonumber
\\[-8pt]
\\[-8pt]
\eqntext{s\in[0,t], x\ge0,}
\end{eqnarray}
which completes Proposition 1, page 282 in~\cite{bi}.
\end{example}

We end this section with a remark on the existence of a
density with respect to the Lebesgue measure, for the law of the local
time of general Markov processes. From (\ref{7620}), we derive that
$\p
(\tau_x\ge t) \,dt=\int_0^x\int_{(0,t]}n^*(t-s<\zeta)\p(\tau_y\in ds)
\,dy \,dt+
\mathtt{d}\p(\tau_x\in dt)$.
Actually, this identity may be generalized to any subordinator $S$ with
drift $b$, killing rate $k$ and
L\'evy measure $\nu$. Set $\bar{\nu}(t)=\nu(t,\infty)+k$, then the
characteristic exponent $\Phi$ of $S$
is given by
\[
\Phi(\alpha)=\alpha b+\alpha\int_0^\infty e^{-\alpha t}\bar{\nu}(t)
\,dt,
\]
from which and Fubini theorem, we derive that for all $x\ge0$ and
$\alpha>0$,
\begin{eqnarray*}
\frac1\alpha\e(1-e^{-\alpha S_x})&=&\biggl(b+\int_0^\infty e^{-\alpha
t}\bar{\nu}(t) \,dt\biggr)\frac{\e(1-e^{-\alpha S_x})}{\Phi(\alpha)},\\
\int_0^\infty e^{-\alpha t}\p(S_x>t) \,dt&=&\biggl(b+\int_0^\infty
e^{-\alpha t}\bar{\nu}(t) \,dt\biggr)\int_0^\infty e^{-\alpha t}
\int_0^x\p(S_y\in dt) \,dy.
\end{eqnarray*}
Inverting the Laplace transforms on both sides of this identity gives
for all $x\ge0$, the following identity between measures:
\[
\p(S_x>t) \,dt=\int_0^x\int_{(0,t]}\bar{\nu}(t-s) \p(S_y\in ds) \,dy
\,dt+b\int_0^x\p(S_y\in dt) \,dy.
\]
In particular, if $S$ has no drift coefficient, then the law of
$L_t\eqdef\inf\{u:S_u>t\}$ has density
\[
\frac{\p(L_t\in dx)}{dx}=\int_{(0,t]}\bar{\nu}(t-s) \p(S_x\in ds).
\]
This computation shows that if $a\in\mathbb{R}$ is a regular state of
any real Markov process~$M$ such that $\int_0^t\ind_{\{M_s=a\}} \,ds=0$,
a.s. for all $t$, then the law of the local time of $M$, at level $a$,
is absolutely continuous,\vadjust{\goodbreak} for any time $t>0$.
This last result is actually a particular case of~\cite{dr}, where it
is proved that for any non creeping L\'evy process,
the law of the first passage time over $x>0$ is always absolutely
continuous.

%s5 ###
\section{Proofs and further results}\label{proofs}
We first prove Theorems~\ref{mainth} and~\ref{law}, since they will be
used in the proofs of the results of Section~\ref{main}.

\begin{pf*}{Proof of Theorem \protect\ref{mainth}}
Let $\mathbf{e}$ be an
exponential time with parameter $q>0$ which is
independent of $(X,\mathbb{P})$. Recall the notations of Section \ref
{prelim}, and for $\omega\in\mathcal{ D}$, define
$d_s(\omega)=\inf\{u>s:\omega_u=0\}$, so that $d_s(\overline{X}-X)$
corresponds to the right extremity of the excursion
of $\overline{X}-X$, which straddles the time~$s$. From the
independence of $\mathbf{e}$ and Fubini theorem, we have for all
bounded function $f$ on $\mathbb{R}_+$ and for all bounded Borel
functionals $F$ and $K$ on~$\mathcal{ D}$,
\begin{eqnarray*}
&&\e\bigl(f(g_{\mathbf{e}})F\circ r_{g_{\mathbf{e}}}K\circ k_{{\mathbf
{e}}-g_{\mathbf{e}}}\circ\theta_{g_{\mathbf{e}}}\bigr)\nonumber\\
&&\qquad=
\e\biggl(\int_0^\infty q e^{-q t} f(g_{t})F\circ r_{g_{t}}K\circ
k_{t-g_{t}}\circ\theta_{g_{t}} \,dt\biggr)
\nonumber
\\[-8pt]
\\[-8pt]
\nonumber
&&\qquad=\e\biggl(\sum_{s\in G}q e^{-q s}f(s)F\circ r_s\int_s^{d_s}e^{-q
(u-s)} K\circ k_{u-s}\circ\theta_{s}
\,du\biggr)\\
&& \qquad\quad{}+\e\biggl(\int_0^\infty q e^{-q
t}f(t)F\circ r_t\ind_{\{g_t=t\}} \,dt\biggr)K(\omega^0).\nonumber
\end{eqnarray*}
Recall from Section~\ref{prelim} that $\varepsilon^s$ denotes the
excursion starting at $s$. Then
%
%
%e1 ###
\begin{eqnarray}\label{3618}
&&\e\bigl(f(g_{\mathbf{e}})F\circ r_{g_{\mathbf{e}}}K\circ k_{{\mathbf
{e}}-g_{\mathbf{e}}}\circ\theta_{g_{\mathbf{e}}}\bigr)\nonumber\\
&&\qquad=
\e\biggl(\sum_{s\in G}q e^{-q s}f(s)F\circ r_s\int_0^{d_s-s}e^{-q u}
K(\varepsilon^s\circ k_u)
\,du\biggr)\\
&&\quad\qquad{}+\e\biggl(\int_0^\infty q e^{-q t}f(t)F\circ r_t\ind_{\{X_t=\overline
{X}_t\}} \,dt\biggr)K(\omega^0).\nonumber
\end{eqnarray}
The process
\[
(s,\omega,\varepsilon)\mapsto e^{-q s}f(s)F\circ r_s(\omega)\int
_0^{\zeta
(\varepsilon)} e^{-q u}K\circ k_u(\varepsilon) \,du
\]
is $\mathcal{ P}(\mathcal{ F}_s)\otimes\mathcal{ E}$-measurable, so that by
applying (\ref{compensation}) and (\ref{delta}) to equality
(\ref{3618}), we obtain
%
%
%e2 ###
\begin{eqnarray}\label{4264}
&&\frac1q\e\bigl(f(g_{\mathbf{e}})F\circ r_{g_{\mathbf{e}}}K\circ
k_{{\mathbf
{e}}-g_{\mathbf{e}}}\circ\theta_{g_{\mathbf{e}}}\bigr)\nonumber\\
&&\qquad=\e\biggl(\int_0^\infty \,dL_s e^{-q s}f(s)F\circ r_s\biggr)n\biggl(\int
_0^\zeta e^{-q u}K\circ k_u \,du\biggr)\\
&&\qquad\quad{} +\mathtt{d} \e\biggl(\int_0^\infty \,dL_s e^{-q s}f(s)F\circ
r_s\biggr)K(\omega^0).\nonumber
\end{eqnarray}
From the time reversal property of L\'evy processes (see Lemma 2, page
45 in~\cite{be})
under $\p$, we have $X\circ k_e\ed X\circ r_e$, so that
%
%
%e3 ###
\begin{eqnarray}\label{4265}
\e\bigl(f(g_{\mathbf{e}})F\circ r_{g_{\mathbf{e}}}K\circ k_{{\mathbf
{e}}-g_{\mathbf{e}}}\circ\theta_{g_{\mathbf{e}}}\bigr)=
\e\bigl(f(\mathbf{e}-g^*_{\mathbf{e}})K\circ r_{g^*_{\mathbf{e}}}F\circ
k_{{\mathbf{e}}-g^*_{\mathbf{e}}}
\circ\theta_{g^*_{\mathbf{e}}}\bigr).\hspace*{-35pt}
\end{eqnarray}
Doing the same calculation as in (\ref{4264}) for the reflected process
at its minimum $X-\underline{X}$, we get
%
%
%e4 ###
\begin{eqnarray}\label{4268}
&&\frac1q\e\bigl(f(\mathbf{e}-g^*_{\mathbf{e}})K\circ r_{g^*_{\mathbf
{e}}}F\circ k_{{\mathbf{e}}-g^*_{\mathbf{e}}}\circ\theta
_{g^*_{\mathbf
{e}}}\bigr)\nonumber\\[-3pt]
&&\qquad=\e\biggl(\int_0^\infty \,dL_s^* e^{-q s}K\circ r_s\biggr)n^*\biggl(\int
_0^\zeta e^{-q u}f(u)F\circ k_u \,du\biggr)\\[-3pt]
&&\qquad\quad{}
+\mathtt{d}^*
\e\biggl(\int_0^\infty \,dL_s^* e^{-q s}K\circ r_s\biggr)f(0)F(\omega
^0)\nonumber.
\end{eqnarray}
Then we derive from (\ref{4264}), (\ref{4265}) and (\ref{4268}), the
following equality:
%
%
%e5 ###
\begin{eqnarray}\label{4369}
&&\e\biggl(\int_0^\infty \,dL_s e^{-q s}f(s)F\circ r_s\biggr)n\biggl(\int
_0^\zeta e^{-q u}K\circ k_u \,du\biggr)\nonumber\\[-3pt]
&&\quad{} +\mathtt{d} \e\biggl(\int_0^\infty \,dL_s e^{-q
s}f(s)F\circ r_s\biggr)K(\omega^0)
\nonumber
\\[-9pt]
\\[-9pt]
\nonumber
&&\qquad=\e\biggl(\int_0^\infty \,dL_s^* e^{-q s}K\circ r_s\biggr)n^*\biggl(\int
_0^\zeta e^{-q u}f(u)F\circ k_u \,du\biggr)\\[-3pt]
&&\qquad\quad{} +\mathtt{d}^* \e\biggl(\int_0^\infty \,dL_s^* e^{-q
s}K\circ r_s\biggr)f(0)F(\omega^0).\nonumber
\end{eqnarray}
Then by taking $f\equiv1$, $F\equiv1$ and $K\equiv1$, we derive from
(\ref{4264}) that
%
%
%e6 ###
\begin{equation}\label{3247}
\kappa(q,0)=n(1-e^{-q\zeta})+q\mathtt{d}.
\end{equation}
Now suppose that $X$ is of type 1 or 2, so that $\mathtt{d}^*=0$, from
what has been recalled in Section~\ref{prelim}.
Hence with $K\equiv1$ in (\ref{4369}) and using (\ref{3247}), we have
%
%
%e7 ###
\begin{eqnarray}\label{4036}
&&\e\biggl(\int_0^\infty \,dL_s e^{-q s}f(s)F\circ r_s\biggr)\kappa
(q,0)\kappa^*(q,0)
\nonumber
\\[-9pt]
\\[-9pt]
\nonumber
&&\qquad=q n^*\biggl(\int_0^\zeta e^{-q u}f(u)F\circ k_u \,du\biggr).
\end{eqnarray}
But using (\ref{wh}) and plugging (\ref{4036}) into (\ref{4264}) gives
\begin{eqnarray*}
&&\e\biggl(\int_0^\infty e^{-q t} f(g_{t})F\circ r_{g_{t}}K\circ
k_{t-g_{t}}\circ\theta_{g_{t}} \,dt\biggr)\\[-3pt]
&&\qquad=n^*\biggl(\int_0^\zeta e^{-q u}f(u)F\circ k_u \,du\biggr)
n\biggl(\int_0^\zeta e^{-q u}K\circ k_u \,du\biggr)\\[-3pt]
&&\qquad\quad{}+\mathtt{d} n^*\biggl(\int_0^\zeta e^{-q u}f(u)F\circ k_u \,du
\biggr)K(\omega^0),
\end{eqnarray*}
so that identities (\ref{1a}) and (\ref{2a}) follow for $\lambda
$-almost every $t>0$, by inverting the Laplace transforms in this equality.
Then we easily check that for all $t>0$, the functionals $g_t$,
$r_{g_t}$ and $k_{t-g_t}\circ\theta_{g_t}$ are a.s.~continuous, at any
time $t$.
Hence for any bounded and continuous functions $f$, $F$ and $K$, from
Lebesgue's theorem of dominated convergence,
the left-hand sides of (\ref{1a}) and (\ref{2a}) are continuous in $t$.
From the same arguments, the functions
$t\mapsto n^*(F\circ k_t,t<\zeta)$ and $t\mapsto n(K\circ k_k,t<\zeta)$
are continuous. Hence, from general properties of the convolution product,
the right-hand sides of (\ref{1a}) and (\ref{2a}) are continuous, so
that these identities are valid for all $t>0$.
Then we extend this result to any bounded Borel functions $f$, $F$ and~$K$ through a classical
density argument. Finally, (\ref{3a}) is obtained in the same way as
parts 1 and~2.
\end{pf*}

\begin{pf*}{Proof of Theorem \protect\ref{law}}
Let $g$ and $h$ be two bounded Borel functions on $\mathbb{R_+}$, and
define the functionals $K$ and $F$ on $\mathcal{ D}$ by
$F(\omega)=g(\omega_\zeta)$ and $K(\omega)=h(\omega_\zeta)$. Then we
may check that for $\varepsilon\in\mathcal{ E}$ and $t<\zeta(\varepsilon)$,
$F\circ k_t(\varepsilon)=g(\varepsilon_t)$ and $K\circ k_t(\varepsilon
)=h(\varepsilon_t)$. Moreover since the lifetime of the path
$k_{t-g_t}\circ\theta_{g_t}(\omega)$ is $t-g_t(\omega)$ and $\Delta
^+_{g_t}(\omega)=(\omega_{g_t}-\omega_{g_t-})^+=(\overline
{X}_t-\overline{X}_{t-})(\omega)$,
we have $F\circ r_{g_t}\circ X=g(\overline{X}_t)$ and $K\circ
k_{t-g_t}\circ\theta_{g_t}\circ X=h(\overline{X}_t-X_t)$, so that by
applying Theorem~\ref{mainth} to the functionals $F$ and $K$, we obtain
(\ref{both}).

To prove (\ref{infty}), we first note that $\lim_{t\rightarrow\infty
}(g_t,\overline{X}_t)=(g_\infty,\overline{X}_\infty)$, a.s.
Then let $f$ be a bounded and
continuous function which is defined on $\mathbb{R}_+^2$. We have from~(\ref{both}),
\begin{eqnarray*}
\e(f(g_t,\overline{X}_t))&=&\int_0^tf(s,x)n(t-s<\zeta) q_s^*(dx)
\,ds\\
&&{}+\mathtt{d}\int_0^\infty f(t,x)q_t^*(dx)+\mathtt{d}^*n(t<\zeta)f(0,0).
\end{eqnarray*}
On the one hand, we see from (\ref{pi}) that $\lim_{t\rightarrow
\infty}
n(t<\zeta)=n(\zeta=\infty)=a>0$.
On the other hand, $\lim_{t\rightarrow\infty} n^*(t<\zeta)=0$, and
since the term $\mathtt{d}\int_0^\infty f(t,x)q_t^*(dx)$ is bounded by
$Cn^*(t<\zeta)$, where $|f(s,x)|\le C$, for all $s,x$, it converges to
0 as $t$ tends to $\infty$. This allows
us to conclude.
\end{pf*}

Recall that the definition of the ladder height process $(H_t)$ has
been given in Section~\ref{prelim}.
Then define $(\ell_x, x\ge0)$ as the right continuous inverse of $H$,
that is,
\[
\ell_x=\inf\{t:H_t> x\}.
\]
Note that for types 1 and 2, since $H$ is a strictly increasing subordinator,
the process $(\ell_x, x\ge0)$ is continuous, whereas in type 3, since
$H$ is a compound Poisson process, then $\ell$
is a c\`adl\`ag jump process.
Parts 1 and 2 of the following lemma are reinforcements of Theorems 3
and 5 in~\cite{ac}. Part 1 is also stated in Proposition 9 of
\cite{do}. Recall that $V(dt,dx)$ denotes the potential
measure of the ladder process $(\tau,H)$.\vadjust{\goodbreak}

\begin{lemma}\label{equivalence1} Let $X$ be a L\'evy process which is
not a compound Poisson process
and such that $|X|$ is not a subordinator.
\begin{enumerate}[(1)]
\item[(1)] The following identity between measures holds on $\mathbb
{R}_+^3$:
%
%e8 ###
\begin{equation}\label{4594}
u\p(X_t\in dx, \ell_x\in du) \,dt=t\p(\tau_u\in dt,H_u\in dx) \,du.
\end{equation}
\item[(2)] The following identity between measures holds on $\mathbb
{R}_+^2$:
%
%e9 ###
\begin{equation}\label{26811}
\mathtt{d}^*\delta_{\{(0,0)\}}(dt,dx)+q_t^*(dx) \,dt=V(dt,dx),
\end{equation}
moreover for all $t>0$, and for all Borel sets $B\in\mathcal{ B}_{\mathbb
{R}_+}$, we have
%
%
%e10 ###
\begin{equation}\label{2681}
q_t^*(B)=t^{-1}\e\bigl(\ell(X_t)\ind_{\{X_t\in B\}}\bigr).
\end{equation}
\item[(3)] For all $t>0$, the measures $q_t^*(dx)$ and $p_t^+(dx)$
are equivalent on $\mathbb{R}_+$.
\end{enumerate}
\end{lemma}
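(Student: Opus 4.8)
The three assertions are tightly linked: part~$1$ is the deepest statement, while (\ref{26811}), (\ref{2681}) and part~$3$ will follow from it together with a time-reversal identity already available from the proof of Theorem~\ref{mainth}. I would therefore prove (\ref{4594}) first. That identity is a bivariate Kendall-type switching identity: it compares the law of $(X_t,\ell_{X_t})$ with $t$ weighted by Lebesgue measure against the law of the ascending ladder process $(\tau_u,H_u)$ with $u$ weighted by Lebesgue measure, the multipliers $u$ and $t$ being the Kendall weights. My plan is to verify it through its triple Laplace transform in $(t,x,u)$. On the ladder side, (\ref{kappa}) gives $\e(e^{-\alpha\tau_u-\beta H_u})=e^{-u\kappa(\alpha,\beta)}$, so that, writing $\kappa_\alpha=\partial_\alpha\kappa$,
\[\int_0^\infty\!\!\int_0^\infty\!\!\int_0^\infty e^{-\alpha t-\beta x-\gamma u}\,t\,\p(\tau_u\in dt,H_u\in dx)\,du=\int_0^\infty e^{-\gamma u}\,\e\!\left(\tau_u e^{-\alpha\tau_u-\beta H_u}\right)du=\frac{\kappa_\alpha(\alpha,\beta)}{\big(\gamma+\kappa(\alpha,\beta)\big)^2}.\]
For the process side I would compute $\int_0^\infty e^{-\alpha t}\e(\ell_{X_t}e^{-\beta X_t-\gamma\ell_{X_t}})\,dt=-\partial_\gamma\,\alpha^{-1}\e(e^{-\beta X_{\mathbf e}-\gamma\ell_{X_{\mathbf e}}})$, where $\mathbf e$ is exponential of parameter $\alpha$; note that the weight $\ell_{X_t}$ automatically restricts matters to $\{X_t\ge0\}$, since $\ell_x=0$ for $x<0$. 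The Wiener--Hopf factorization splits $X_{\mathbf e}=\overline X_{\mathbf e}-(\overline X_{\mathbf e}-X_{\mathbf e})$ into independent pieces, with $\overline X_{\mathbf e}=H_{L_{\mathbf e}}$ governed by the ladder process and $\ell_{X_{\mathbf e}}$ recovered from the ladder trajectory below the level $\overline X_{\mathbf e}$; carrying this out and inverting the transform should return (\ref{4594}). I expect this left-hand transform to be the main obstacle, because $\ell_{X_t}$ is a functional of the whole supremum trajectory rather than of $X_t$ alone: a naive Fubini reduction (using $\ell_{X_t}=\int_0^\infty\ind_{\{H_u\le X_t\}}\,du$ and $\tau_u=\int_0^\infty\ind_{\{L_t<u\}}\,dt$) evaluates the test function at mismatched arguments on the two sides, so the duality between $X$ and its ladder process must be used in an essential way.

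For (\ref{26811}) I would bypass part~$1$ and argue directly. Setting $K\equiv1$ in (\ref{4369}) and using (\ref{3247}) together with the Wiener--Hopf relation (\ref{wh}) and $\e(\int_0^\infty e^{-\varepsilon s}\,dL^*_s)=1/\kappa^*(\varepsilon,0)$ yields the reversal identity
\[\e\!\left(\int_0^\infty e^{-\varepsilon s}f(s)\,F\circ r_s\,dL_s\right)=n^*\!\left(\int_0^\zeta e^{-\varepsilon u}f(u)\,F\circ k_u\,du\right)+{\tt d}^*f(0)\,F(\omega^0).\]
Taking $F(\omega)=g(\omega_\zeta)$, the left side becomes $\int\!\!\int e^{-\varepsilon t}f(t)g(x)\,V(dt,dx)$ after the change of variables $u=L_s$, $s=\tau_u$, $\overline X_s=H_u$ (legitimate since $dL$ is carried by $\{X_s=\overline X_s\}$, where moreover $F\circ r_s=g(\overline X_s)$ as there is no negative jump at a record time), while the right side becomes $\int_0^\infty e^{-\varepsilon u}f(u)\big(\int g\,dq_u^*\big)\,du+{\tt d}^*f(0)g(0)$, the last term being an atom at the origin. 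Matching Laplace transforms in $t$ for all $f,g$ gives (\ref{26811}). Then (\ref{2681}) follows by integrating (\ref{4594}) over $u\in[0,\infty)$: the right side gives $t\,V(dt,dx)$ and the left side gives $\e(\ell(X_t)\ind_{\{X_t\in dx\}})\,dt$; since the factor $t$ annihilates the atom of $V$ at $(0,0)$, substituting (\ref{26811}) produces $\e(\ell(X_t)\ind_{\{X_t\in dx\}})=t\,q_t^*(dx)$ for $t>0$.

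Finally, for part~$3$, I read off from (\ref{2681}) that $q_t^*$ is absolutely continuous with respect to $p_t^+$ with density $t^{-1}\e(\ell_{X_t}\mid X_t=\cdot)$. This density is finite $p_t^+$-almost everywhere because $q_t^*$ is a finite measure, $q_t^*(\mathbb{R}_+)=n^*(t<\zeta)<\infty$ by (\ref{pi}), and it is strictly positive for almost every $x>0$, since $\ell_x=\inf\{u:H_u>x\}>0$ whenever $x>0$ (the ladder height process being a subordinator with $H_0=0$). As $p_t^+$ carries no mass at $0$ ($p_t$ being continuous) and the same holds for $q_t^*$, the two measures have the same null sets on $\mathbb{R}_+$, which is the asserted equivalence.
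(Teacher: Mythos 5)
Your treatment of part 1 contains a genuine gap, and part 1 is the heart of the lemma. You compute the ladder side of the triple Laplace transform correctly (your $\kappa_\alpha(\alpha,\beta)/(\gamma+\kappa(\alpha,\beta))^2$), but the process side --- the transform of $u\,\p(X_t\in dx,\,\ell_x\in du)\,dt$ --- is precisely what identity (\ref{4594}) asserts, and you leave it unevaluated, acknowledging it as ``the main obstacle''. The missing ingredient is Fristedt's identity (\ref{fris}): differentiating it in $\alpha$ gives
\[
\frac{\partial}{\partial\alpha}\kappa(\alpha,\beta)=\kappa(\alpha,\beta)\int_0^\infty e^{-\alpha t}\,\e\left(e^{-\beta X_t}\ind_{\{X_t\ge0\}}\right)dt\,,
\]
so that $\e(\tau_u e^{-\alpha\tau_u-\beta H_u})=-u\frac{\partial}{\partial u}\e(e^{-\alpha\tau_u-\beta H_u})\cdot\int_0^\infty e^{-\alpha t}\e\left(e^{-\beta X_t}\ind_{\{X_t\ge0\}}\right)dt$; writing this product as a single expectation by means of the independent copy $\tilde X=(X_{\tau_u+t}-X_{\tau_u},\,t\ge0)$ and using the Markov property at $\tau_u$, it becomes $-u\frac{\partial}{\partial u}\int_0^\infty dt\, e^{-\alpha t}\int_{[0,\infty)}e^{-\beta x}\p(X_t\in dx,\,\ell_x>u)$, and (\ref{4594}) follows by inverting the Laplace transforms. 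This is exactly the ``duality between $X$ and its ladder process'' that you say must be used in an essential way; your alternative sketch (Wiener--Hopf splitting of $X_{\mathbf{e}}$ at its supremum) is not implausible, since $\ell_{X_{\mathbf{e}}}$ is indeed a functional of the pre-supremum ladder trajectory and the independent drop $\overline{X}_{\mathbf{e}}-X_{\mathbf{e}}$, but evaluating the resulting expression would again require Fristedt-type input, so as written it is a plan, not a proof. The gap propagates: your derivations of (\ref{2681}) and of part 3 rest on (\ref{4594}) (exactly as the paper's do), so they are conditional on a statement you have not established.

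The rest is sound. Your proof of (\ref{26811}) is correct and is a genuine, slightly more self-contained variant of the paper's: the paper goes through Corollary \ref{law1} together with identity (6.18) of \cite{ky} (its equation (\ref{fris1})), whereas you extract the reversal identity directly from (\ref{4369}) with $K\equiv1$, using (\ref{3247}), (\ref{wh}) and $\e\left(\int_0^\infty e^{-\varepsilon s}dL_s^*\right)=1/\kappa^*(\varepsilon,0)$, and then identify $\e\left(\int_0^\infty e^{-\varepsilon s}f(s)g(\overline{X}_s)\,dL_s\right)$ with the transform of $V(dt,dx)$ by the time change $s=\tau_u$; this also treats the three types simultaneously, where the paper handles ${\tt d}^*=0$ and type 3 separately. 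Your justification that $F\circ r_s=g(\overline{X}_s)$ on the support of $dL$ is right (no negative jump at a point of increase of $\overline{X}$, the $-\Delta_s^-$ correction in $r_s$ covering limit points of that set). Finally, your part 3 argument is essentially the paper's, phrased with a conditional density instead of the paper's inequality $0<\e(\ell_y\ind_{\{X_t\in B,\,X_t>y\}})\le\e(\ell(X_t)\ind_{\{X_t\in B\}})$; both rest on $\ell_x>0$ a.s.~for $x>0$ and on the continuity of $p_t$, so this part would stand once (\ref{2681}) is secured.
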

\begin{pf} When 0 is regular for $(-\infty,0)$, part (1) is proved in
Theorem 3 of~\cite{ac} and when 0 is regular for both $(-\infty,0)$
and $(0,\infty)$, part (2)~is proved in Theorem~5 of~\cite{ac}.
Although the proofs of parts (1) and (2) in the general case follow about the same
scheme as in~\cite{ac}, it is necessary to check some details.\looseness=-1

First recall the so-called Fristedt identity which is established in
all the cases concerned by this
lemma, in~\cite{ky}; see Theorem 6.16. For all $\alpha\ge0$ and
$\beta
\ge0$, the characteristic exponent of the ladder process $(\tau,H)$
is given by
%
%
%e11 ###
\begin{equation}\label{fris}
\kappa(\alpha,\beta)=\exp\biggl(\int_0^\infty \,dt\int_{[0,\infty
)}(e^{-t}-e^{-\alpha t-\beta x})t^{-1} \p(X_t\in dx)\biggr).
\end{equation}
Note that the constant $k$, which appears in this theorem, is equal to
1, according to our normalization;
see Section~\ref{sec1}. Then recall the definition of $\kappa(\alpha,\beta)$:
$\e(e^{-\alpha\tau_u-\beta H_u})=e^{-u\kappa(\alpha,\beta
)}$. This expression
is differentiable, in $\alpha>0$ and in $u>0$. Differentiating first
both sides in $\alpha$, we obtain
%
%
%e12 ###
\begin{equation}\label{6621}
\e(\tau_ue^{-\alpha\tau_u-\beta H_u})=u \e(e^{-\alpha\tau
_u-\beta
H_u})\frac{\partial}{\partial\alpha}\kappa(\alpha,\beta).
\end{equation}
Then since
\begin{eqnarray*}
&&\frac{\partial}{\partial\alpha}\int_0^\infty \,dt\int_{[0,\infty
)}(e^{-t}-e^{-\alpha t-\beta x})t^{-1} \p(X_t\in dx)\\
&&\qquad=\int_0^\infty
\,dt\int_{[0,\infty)}e^{-\alpha t-\beta x} \p(X_t\in dx)\\
&&\qquad=\int_0^\infty e^{-\alpha t}\e\bigl(e^{-\beta X_t}\ind_{\{X_t\ge0\}
}\bigr)\,dt,
\end{eqnarray*}
we derive from (\ref{6621}) and (\ref{fris}) that
\begin{eqnarray*}
\e(\tau_ue^{-\alpha\tau_u-\beta H_u})&=&-u\int_0^\infty e^{-\alpha
t}\e
\bigl(e^{-\beta X_t}\ind_{\{X_t\ge0\}}\bigr)\,dt\frac{\partial}{\partial
u}\e(e^{-\alpha\tau_u-\beta H_u})\\
&=&-u\frac{\partial}{\partial u}\int_0^\infty\e\bigl(\e(e^{-\alpha
\tau
_u-\beta H_u})e^{-\alpha t}e^{-\beta X_t}\ind_{\{X_t\ge0\}}\bigr)\,dt.
\end{eqnarray*}
Let $\tilde{X}$ be a copy of $X$ which is independent of $(\tau
_u,H_u)$. Then the above expression may be written as
\[
\e(\tau_ue^{-\alpha\tau_u-\beta H_u})
=-u\frac{\partial}{\partial u}\e\biggl(\int_0^\infty\exp\bigl(-\alpha
(t+\tau
_u)-\beta(\tilde{X}_t+H_u)\bigr)\ind_{\{\tilde{X}_t\ge0\}} \,dt\biggr).
\]
For $\tilde{X}$, we may take, for instance,
$\tilde{X}=(X_{\tau_u+t}-X_{\tau_u}, t\ge0)$, so that it follows from
a change of variables and
the definition of $(\ell_x, x\ge0)$,
\begin{eqnarray*}
\e(\tau_ue^{-\alpha\tau_u-\beta H_u})&=&-u\frac{\partial
}{\partial u}\e
\biggl(\int_0^\infty\exp\bigl(-\alpha(t+\tau_u)-\beta X_{\tau_u+t}\bigr)\ind_{\{
X_{\tau_u+t}\ge H_u\}} \,dt\biggr)\\
&=&-u\frac{\partial}{\partial u}\e\biggl(\int_0^\infty\exp(-\alpha
t-\beta X_{t})\ind_{\{X_{t}\ge H_u, \tau_u\le t\}} \,dt\biggr)\\
&=&-u\frac{\partial}{\partial u}\int_0^\infty \,dt e^{-\alpha t}\int
_{[0,\infty)} e^{-\beta x}\p(X_t\in dx,\ell_x> u),
\end{eqnarray*}
from which we deduce that
\begin{eqnarray*}
&&\int_{[0,\infty)^2}e^{-\alpha t-\beta x} t \p(\tau_u\in dt,H_u\in
dx) \,du\\
&&\qquad=
\int_{[0,\infty)^2} e^{-\alpha t-\beta x}u \p(X_t\in dx,\ell_x\in du)
\,dt,
\end{eqnarray*}
and (\ref{4594}) follows by inverting the Laplace transforms.

Let $\mathbf{e}$ be an exponentially distributed random variable with
parameter $q$, which is independent of $X$.
From identity (6.18), page 159 in~\cite{ky}, we have
%
%
%e13 ###
\begin{equation}\label{fris1}
\qquad\quad\e(\exp(-\beta\overline{X}_\mathbf{e}))=\kappa(q,0)\int
_{[0,\infty)^2}e^{-q t-\beta x}
\int_0^\infty\p(\tau_s\in dt,H_s\in dx) \,ds.
\end{equation}
Suppose that $X$ is of type 1 or 2. By taking the Laplace transforms in
$x$ and $t$ of identity (\ref{both1}) in Corollary
\ref{law1}, we obtain
%
%
%e14 ###
\begin{equation}\label{3137}
\e(\exp(-\beta\overline{X}_\mathbf{e}))=\bigl(q\mathtt{d}+n(1-e^{-q\zeta})\bigr)
n^*\biggl(\int_0^\zeta e^{-q s}e^{-\beta\varepsilon_s} \,ds\biggr),
\end{equation}
and by comparing (\ref{3247}), (\ref{fris1}) and (\ref{3137}), it follows
%
%
%e15 ###
\begin{equation}\label{1247}
\quad\qquad n^*\biggl(\int_0^\zeta e^{-q s}e^{-\beta\varepsilon_s} \,ds\biggr)=\int
_{[0,\infty)^2}e^{-q t-\beta x}
\int_0^\infty\p(\tau_s\in dt,H_s\in dx) \,ds.
\end{equation}
Then we derive part 2 from (\ref{1247}), (\ref{4594}) and uniqueness of
the Laplace transform. If $X$ is of type 3, then taking
the Laplace transforms in $x$ and $t$ of identity (\ref{both1}), gives
%
%
%e16 ###
\begin{equation}\label{3138}
\e(\exp(-\beta\overline{X}_\mathbf{e}))= n(1-e^{-q\zeta})
\biggl(\mathtt{d}^*+n^*\biggl(\int_0^\zeta e^{-q t}e^{-\beta\varepsilon_t}
\,dt\biggr)\biggr),
\end{equation}
so that by comparing (\ref{3247}), (\ref{fris1}) and (\ref{3138}),
we obtain
%
%
%e17 ###
\begin{eqnarray}\label{1250}
&&\mathtt{d}^*+n^*\biggl(\int_0^\zeta e^{-q t}e^{-\beta\varepsilon_t} \,dt
\biggr)
\nonumber
\\[-8pt]
\\[-8pt]
\nonumber
&&\qquad=\int_{[0,\infty)^2}e^{-q t-\beta x}
\int_0^\infty\p(\tau_s\in dt,H_s\in dx) \,ds,
\end{eqnarray}
and part 2 follows from (\ref{1250}) and (\ref{4594}) in this case.

Then we show the third assertion. First note that $q_t^*$ is absolutely
continuous with respect to $p_t^+$ for all $t>0$,
since from (\ref{2681}) we have for any Borel set $B\subset\mathbb
{R}_+$ such that $\p(X_t\in B)=0$,
\[
q_t^*(B)=t^{-1}\e\bigl(\ell(X_t)\ind_{\{X_t\in B\}}\bigr)=0.
\]
Conversely, take a Borel set $B\subset\mathbb{R}_+$, such that $\p
(X_t\in B)>0$. Then since $\p(X_t=0)=0$, there exists $y>0$ such that
$\p(X_t\in B, X_t>y)>0$. As the right continuous inverse of a subordinator,
$(\ell_x)$ is nondecreasing and we have for all $x>0$, $\p(\ell_x>0)=1$.
Therefore the result follows from the inequality
\[
0<\e\bigl(\ell_{y}\ind_{\{X_t\in B, X_t>y\}}\bigr)\le\e\bigl(\ell(X_t)\ind_{\{
X_t\in
B\}}\bigr),
\]
together with identity (\ref{2681}).
\end{pf}

Recall from Section~\ref{prelim} that $\pi$ is the L\'evy
measure of the ladder time process $\tau$ and that
$\overline{\pi}(t)=\pi(t,\infty)$.

\begin{lemma}\label{equivalence2} Under the assumption of Lemma $\ref
{equivalence1}$, for all $t>0$,
the following measures of $\mathbb{R}_+$:
\[
\int_0^t\overline{\pi}(t-s)q_s^*(dx) \,ds \quad\mbox{and} \quad\int
_0^tq_s^*(dx) \,ds
\]
are equivalent.
\end{lemma}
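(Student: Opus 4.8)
The plan is to prove the two measures are equivalent by showing each is absolutely continuous with respect to the other via a direct comparison of their densities (in $x$) against the reference measure $\int_0^t q_s^*(dx)\,ds$. Since both measures are of the form $\int_0^t w(t-s)\,q_s^*(dx)\,ds$ — the second with constant weight $w\equiv 1$ and the first with weight $w(r)=\overline{\pi}(r)$ — the whole question reduces to controlling the weight $\overline{\pi}(t-s)$ on the time variable $s\in[0,t]$. The key structural fact I would exploit is that $\overline{\pi}(r)=\pi(r,\infty)$ is the tail of the L\'evy measure of the ladder time subordinator $\tau$, hence it is a nonincreasing function of $r\ge 0$; moreover, because $X$ is not a compound Poisson process and $|X|$ is not a subordinator, the ladder time process $\tau$ is a genuine (possibly killed) subordinator with infinite lifetime of activity, so $\overline{\pi}(r)>0$ for every $r>0$.

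First I would establish the direction $\int_0^t\overline{\pi}(t-s)q_s^*(dx)\,ds \ll \int_0^t q_s^*(dx)\,ds$. Fix a Borel set $B\subset\mathbb{R}_+$ with $\int_0^t q_s^*(B)\,ds=0$. Since $s\mapsto q_s^*(B)\ge 0$, this forces $q_s^*(B)=0$ for Lebesgue-a.e.\ $s\in[0,t]$, and then $\int_0^t\overline{\pi}(t-s)q_s^*(B)\,ds=0$ as well because the integrand vanishes a.e. This direction is immediate and uses only $\overline{\pi}\ge 0$.

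The reverse direction, $\int_0^t q_s^*(dx)\,ds \ll \int_0^t\overline{\pi}(t-s)q_s^*(dx)\,ds$, is the main obstacle and is where monotonicity of $\overline{\pi}$ does the work. Suppose $B$ satisfies $\int_0^t\overline{\pi}(t-s)q_s^*(B)\,ds=0$. Because $\overline{\pi}(t-s)>0$ for every $s\in[0,t)$ (here $t-s>0$), the nonnegative integrand $\overline{\pi}(t-s)q_s^*(B)$ vanishing in integral forces $q_s^*(B)=0$ for a.e.\ $s\in[0,t)$, and hence $\int_0^t q_s^*(B)\,ds=0$. The point I must be careful about is precisely the strict positivity $\overline{\pi}(r)>0$ for all $r>0$: this is guaranteed by the standing assumptions, since if $\overline{\pi}$ vanished on some interval $(0,\delta)$ the ladder time process would have no small jumps there and, combined with the drift relation \eqref{delta}, this would contradict either the non-compound-Poisson hypothesis or the assumption that $|X|$ is not a subordinator. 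Once strict positivity is in hand, the equivalence of the two measures follows at once, and I would then note that the argument is uniform in the set $B$, so it yields equivalence of the measures on all of $(\mathbb{R}_+,{\cal B}_{\mathbb{R}_+})$ for each fixed $t>0$.
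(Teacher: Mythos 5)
Your skeleton is the same as the paper's: everything reduces to controlling the weight $r\mapsto\overline{\pi}(r)$ on $(0,t)$, using its monotonicity and positivity. For the direction $\int_0^t\overline{\pi}(t-s)q_s^*(dx)\,ds\ll\int_0^tq_s^*(dx)\,ds$ your argument is correct and in fact cleaner than the paper's: you use the elementary fact that a nonnegative integrand with vanishing integral is null a.e., so that multiplying by the weight $\overline{\pi}(t-s)$ (finite for every $s<t$, since $\pi$ is the L\'evy measure of a subordinator) keeps the integral equal to zero. The paper instead splits the integral at $t-\varepsilon$, bounds $\overline{\pi}(t-s)$ by $\overline{\pi}(\varepsilon)$ on $[0,t-\varepsilon]$, invokes Corollary \ref{law1} to get finiteness of $\int_{t-\varepsilon}^t\overline{\pi}(t-s)q_s^*(B)\,ds$, and lets $\varepsilon\to0$.

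The gap is in the reverse direction, which, as you correctly isolate, rests entirely on the strict positivity $\overline{\pi}(r)>0$ for $0<r<t$; your justification of that positivity is wrong. Since $\overline{\pi}(r)=\pi(r,\infty)$ is nonincreasing, the way positivity can fail is not that ``$\overline{\pi}$ vanishes on some interval $(0,\delta)$'': if $\overline{\pi}$ vanished on $(0,\delta)$, then letting $r\downarrow0$ would give $\pi(0,\infty)=0$, i.e. $\pi\equiv0$. What must actually be excluded is that $\overline{\pi}(r_0)=0$ for some $r_0<t$, i.e. that $\pi$ has \emph{bounded support}, meaning no finite excursion of $X$ below its supremum is longer than $r_0$. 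Your phrase ``no small jumps'' is a mix-up: absence of small jumps of $\tau$ would read $\pi(0,\delta)=0$, which is harmless for the lemma and unrelated to the vanishing of the tail $\overline{\pi}$; and the drift relation (\ref{delta}) plays no role here. Nor can anything about $\tau$ being ``a genuine (possibly killed) subordinator'' give the claim: there exist subordinators, even of infinite activity, whose L\'evy measure is supported in $(0,1]$. What is needed is an argument specific to ladder time processes, showing that the lengths of finite excursions below the supremum have unbounded support; note that when $a>0$, i.e. $X_t\rightarrow-\infty$ a.s., one must in addition rule out that all excursions of length greater than some $r_0$ are infinite, since $\overline{\pi}(r)=n(r<\zeta)-a$ by (\ref{pi}). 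To be fair, the paper relies on the same unproved fact: its first display yields absolute continuity only because $\overline{\pi}(t)>0$, which is assumed tacitly. But the paper leaves the fact implicit, whereas your text offers a purported proof whose logic fails; as written, that step is a genuine gap.
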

\begin{pf} For all Borel set $B\subset\mathbb{R}_+$, we have
\[
\overline{\pi}(t)\int_0^tq_s^*(B) \,ds\le
\int_0^t\overline{\pi}(t-s)q_s^*(B) \,ds,
\]
hence $\int_0^tq_s^*(dx) \,ds$ is absolutely continuous with respect to
$\int_0^t\overline{\pi}(t-s)q_s^*(dx) \,ds$. Moreover,
for all $q\in(0,t)$ and all Borel set $B\subset\mathbb{R}_+$, we may write
\begin{eqnarray*}
\int_0^t\overline{\pi}(t-s)q_s^*(B) \,ds\le
\overline{\pi}(q)\int_0^tq_s^*(B) \,ds+
\int_{t-q}^t\overline{\pi}(t-s)q_s^*(B) \,ds<\infty.
\end{eqnarray*}
Hence if $\int_0^tq_s^*(B) \,ds=0$, then for all $q\in(0,t)$,
\[
\int_0^t\overline{\pi}(t-s)q_s^*(B) \,ds\le
\int_{t-q}^t\overline{\pi}(t-s)q_s^*(B) \,ds<\infty.
\]
The finiteness of the right-hand side of the above inequality can be
derived from
relation (\ref{pi}) and Corollary~\ref{law1}. Hence this term tends to
0 as $q$ tends to 0,
so that the equivalence between the measures $\int_0^t\overline{\pi
}(t-s)q_s^*(dx) \,ds$
and $\int_0^tq_s^*(dx) \,ds$ is proved.
\end{pf}

Now we are ready to prove all the results of Section \ref
{main}.

\begin{pf*}{Proof of Theorem \protect\ref{type}} When $X$ is of type 1 or
2, it follows from Corollary~\ref{law1},
part 3 of Lemma~\ref{equivalence1}, relation (\ref{pi}) and Lemma
\ref
{equivalence2}. When $X$ is of type~3, the arguments
are the same, except that one has to take account of the fact that the
law of $\overline{X}_t$ has an atom at 0, as
it is specified in Corollary~\ref{law1}.~%
\end{pf*}

\begin{pf*}{Proof of Theorem \protect\ref{th2}} We first prove that part
2 implies part 1. To that aim, observe that
\[
\overline{X}_{2t}=\max\Bigl\{\overline{X}_t,X_t+\sup_{0\le s\le
t}(X_{t+s}-X_t)\Bigr\}=\max\Bigl\{\overline{X}_t,X_t+\sup_{0\le s\le
t}X^{(1)}_s\Bigr\},
\]
where $X^{(1)}$ is an independent copy of $X$. From this independence
and the above expression, we easily deduce that if the law of
$\overline{X}_t$ is absolutely continuous, then so is this of
$\overline
{X}_{2t}$. Therefore, from Theorem~\ref{type}, the measure
$\mu^+_{2t}$ is absolutely continuous. This clearly implies that for
all $s\in(0,2t]$, the measure $\mu_s^+$ is absolutely continuous.
Applying Theorem~\ref{type} again, it follows that the law of
$\overline
{X}_s$ is absolutely continuous, for all $s\in(0,2t]$. Then
we show the desired result by reiterating this argument. So, part 1 is
equivalent to part 2.

Let us assume that part 1 holds. Then for all $t>0$, the law of
$\overline{X}_t$ is absolutely continuous. Therefore the resolvent measure
$U(dx)$ is absolutely continuous. Indeed,
let $\mathbf{e}$ be an independent exponentially distributed random time
with parameter 1,
then the law of $\overline{X}_\mathbf{e}$ admits a density, hence the law
of $X_\mathbf{e}=X_\mathbf{e}-\overline{X}_\mathbf{e}+\overline
{X}_\mathbf{e}$
also admits a density, since the random variables $X_\mathbf
{e}-\overline
{X}_\mathbf{e}$ and $\overline{X}_\mathbf{e}$ are independent; see
Chapter VI
in~\cite{be}. Since the law of $X_\mathbf{e}$ is precisely the measure
$U(dx)$, we have proved that part 1 implies part~4.
Then part 4 implies part 3 and from Corollary~\ref{lebesgue}, part 3
implies part 1.

It remains to show that part 5 is equivalent to part 1. To this aim,
first observe that $V(dx)$ is absolutely continuous if and only
if $\int_0^tq_s^*(dx) \,ds$ is absolutely continuous, for all $t>0$. Indeed,
from part 2 of Lemma~\ref{equivalence1}, we have $V(dx)=\int_0^\infty
q_s^*(dx) \,ds$, and hence if $V(dx)$ is absolutely continuous, then so
are the measures $\int_0^t q_s^*(dx) \,ds$, for all $t>0$. Conversely
assume that the measures $\int_0^t q_s^*(dx) \,ds$ are absolutely continuous
for all $t>0$. Let $A$ be a Borel set of $\mathbb{R}_+$ such that
$\lambda_+(A)=0$. From the assumption, $q_s^*(A)=0$, for $\lambda
$-almost every
$s>0$, hence $V(A)=\int_0^\infty q_s^*(A) \,ds=0$, so that $V(dx)$ is
absolutely continuous. Then from Lemma~\ref{equivalence2} and Corollary
\ref{law1}, for each $t$, the law of $\overline{X}_t$ is equivalent to
the measure $\int_0^t q_s^*(dx) \,ds$. Therefore part 5 is equivalent
to part 1.\vadjust{\goodbreak}
\end{pf*}

\begin{pf*}{Proof of Theorem \protect\ref{coro2}} If $p_t^+\ll\lambda^+$
for all $t>0$, then from part 3 of Lemma~\ref{equivalence1}, $q_t^*\ll
\lambda^+$, for all $t>0$. Suppose moreover that 0 is regular for
$(0,\infty)$, and let $A$ be a Borel subset of $\mathbb{R}$,
such that $\lambda(A)=0$. Then from Corollary~\ref{semigroup} and
Fubini theorem, we have
\[
p_t(A)=\int_0^t \,ds q_s^**\bar{q}_{t-s}(A)+\mathtt{d} q_t^*(A),
\]
where $\bar{q}_s$ and $q_s^*$ are extended on $\mathbb{R}$, as in this
corollary.
But from the assumptions, for all $0<s<t$, $q_s^**\bar{q}_{t-s}(A)=0$
and $q_t^*(A)=0$, hence $p_t(A)=0$,
for all $t>0$ and $p_t$ is absolutely continuous, for all $t>0$. So
part 1 implies part 2, and the converse is obvious.

Then it readily follows from part 3 of Lemma~\ref{equivalence1} and
identity (\ref{26811}) that part 1 implies part 3 (recall that $\mathtt{d}^*=0$ in
the present case). Now suppose that $V(dt,dx)$ is absolutely continuous
with respect to the Lebesgue measure on $\mathbb{R}_+^2$. Then we
derive from identity (\ref{26811}) that the measures $q_t^*(dx)$ are
absolutely continuous for $\lambda$-almost every $t>0$. From Corollary
\ref{semigroup}, it means that $p_t$ is absolutely continuous for
$\lambda$-almost every $t>0$. But if the semigroup $p_t$ is absolutely
continuous for some~$t$, then $p_s$ is absolutely continuous for all
$s\ge t$. Hence $p_t$ is actually absolutely continuous, for all $t>0$,
and part 3 implies part 2.

Then suppose that $X$ is of type 1, and recall that $\mathtt{d}=\mathtt{d}^*=0$ in this case. From Theorem~\ref{law} and part 2 of
Lemma~\ref{equivalence1}, we have
%
%
%e18 ###
\begin{equation}\label{potential}
\p(g_t\in ds,\overline{X}_t\in dx)= n(t-s<\zeta)V(ds,dx).
\end{equation}
Since $n(t-s<\zeta)>0$, for all $s\in[0,t]$, we easily derive from
identity (\ref{potential}) that part 3 and part 4 are equivalent.

Let us denote by $p_t^-$ the restriction of $p_t$ to $\mathbb{R}_-$. If
part 2 is satisfied, then $p_t^+$ and $p_t^-$ are absolutely continuous
for all $t>0$. Then from part 3 of Lemma~\ref{equivalence1} applied to
$X$ and its dual process $-X$, it follows that $q_t$ and $q_t^*$ are
absolutely continuous for all $t>0$, so that from Theorem~\ref{law},
the triple $(g_t,\overline{X}_t,X_t)$ is absolutely continuous for all
$t>0$; hence part 2 implies part~5.
Then part 5 clearly implies part 4.
\end{pf*}

\begin{pf*}{Proof of Proposition \protect\ref{coro3}} In this proof, it
suffices to assume that $Y$ is
a deterministic process, that is, $(T_n)$, $(a_n)$ and $(b_n)$ in (\ref
{elem}) are deterministic sequences.

In order to prove part 1, let us first assume that $a_n=0$, for all
$n$. Then recall that from Theorem~\ref{th2}, the law
of $\overline{X}_t$ is absolutely continuous, for all $t>0$.
Fix $t>0$ and let $n$ be such that $t\in[T_n,T_{n+1})$. Set
$Z_k=Y_{T_k}+\sup_{T_k\le s<T_{k+1}}X_s$ and
$Z=Y_{T_n}+\sup_{T_n\le s<t}X_s$, and then we have
%
%
%e19 ###
\begin{equation}\label{4580}\sup_{s\le t}X_s+Y_s=\max\{Z_1,Z_2,\ldots,Z_{n-1},Z\}.
\end{equation}
But we can write
%
%
%e20 ###
\begin{equation}\label{4380}
 Z_k=Y_{T_k}+X_{T_k}+\sup_{s\le T_{k+1}-T_k}X^{(k)}_s\quad\!\! \mbox{and}\quad\!\!
Z=Y_{T_n}+
X_{T_n}+\sup_{s\le t-T_n}X^{(n)}_s,\hspace*{-35pt}
\end{equation}
where $X^{(k)}$, $k=1,\ldots,n$ are copies of $X$ such that $X,
X^{(k)}$, $k=1,\ldots,n$ are independent.
From Theorem~\ref{th2}, the laws of $\sup_{s\le T_{k+1}-T_k}X^{(k)}_s$
and\break $\sup_{s\le t-T_n}X^{(n)}_s$ are absolutely continuous.
From the representation (\ref{4380}) and the independence hypothesis,
we derive that the laws of $Z_1,Z_2,\ldots,Z_{k-1}$ and $Z$ are
absolutely continuous. Since the maximum of any finite sequence of
absolutely continuous random variables is itself
absolutely continuous, we conclude that the law of $\sup_{s\le
t}X_s+Y_s$ is absolutely continuous, and the first part is proved.

Now we assume that $(a_n)$ is any deterministic sequence. Then we have
(\ref{4580}) with
%
%
%e21 ###
\begin{eqnarray}\label{4381}
Z_k&=&b_k+X_{T_k}+\sup_{s\le T_{k+1}-T_k}X^{(k)}_s+a_ks\quad \mbox{and}
\nonumber
\\[-8pt]
\\[-8pt]
\nonumber
Z&=&b_n+
X_{T_n}+\sup_{s\le t-T_n}X^{(n)}_s+a_ns,
\end{eqnarray}
where $X^{(k)}$, $k=1,\ldots,n$ are as above. If $p^+_t\ll\lambda^+$
for all $t$, then this property also holds
for the process $X$ with any drift $a$, that is, $X_t+at$, so from
Theorem~\ref{type} the laws of $\sup_{s\le T_{k+1}-T_k}X^{(k)}_s+a_ks$
and $\sup_{s\le t-T_n}X^{(n)}_s+a_ns$ are absolutely continuous, and we
conclude that the law of $\sup_{s\le t}X_s+Y_s$ is
absolutely continuous, in the same way as for the first part.

Finally, if $X$ has unbounded variations, then it is of type 1. If
moreover, for instance, the ladder
height process at the supremum $H$ has a positive drift, then from
Theorem~\ref{th2} and the remark thereafter, the law of $\overline
{X}_t$ is absolutely continuous for all $t>0$. Since $X$ has unbounded
variations, it follows from (iv) page 64 in~\cite{do} that
for any $a\in\mathbb{R}$, the ladder height process at the supremum of
the drifted L\'evy process $X_t+at$ also has a positive drift,
and since $X_t+at$ is also of type 1, the law of $\sup_{s\le t}X_s+as$
is absolutely continuous. Then from Theorem~\ref{th2}, the laws of
$\sup
_{s\le T_{k+1}-T_k}X^{(k)}_s+a_ks$ and $\sup_{s\le
t-T_n}X^{(n)}_s+a_ns$ are absolutely continuous, and again we conclude
that the law of $\sup_{s\le t}X_s+Y_s$ is absolutely continuous, in the
same way as for the first part.
\end{pf*}

\begin{pf*}{Proof of Proposition \protect\ref{plsn}} Recall that under
the assumption of this proposition, we have $\mathtt{d}^*=0$.
So, we derive from Theorem~\ref{law}, by integrating identity (\ref
{both}) over $y$ and from part 2 of Lemma~\ref{equivalence1},
that
\begin{eqnarray*}
\p(g_t\in ds,\overline{X}_t\in dx)&= &s^{-1}n(t-s<\zeta)\e\bigl(\ell(x)\ind_{\{X_s\in dx\}}\bigr)\ind
_{(0,t]}(s) \,ds\\
&&{}+
\mathtt{d}\delta_{\{t\}}(ds)t^{-1}\e\bigl(\ell(x)\ind_{\{X_s\in dx\}}\bigr).
\end{eqnarray*}
Since $X$ has no positive jumps, then $\overline{X}_t$ continuous.
Moreover, it is an increasing additive functional
of the reflected process $\overline{X}_t-X_t$, such that
\[
\e\biggl(\int_0^\infty e^{-t}\, d\overline{X}_t\biggr)=\Phi(1)^{-1},
\]
where $\Phi$ is the Laplace exponent of the subordinator $T_x=\inf\{
t:X_t>x\}$. Hence we have
$L_t=c\overline{X}_t$, with $c=\Phi(1)$. Then it follows from the
definition of $H$ and $\ell$, that
\[
H_u=c^{-1}u,\qquad \mbox{on $H_u<\infty$}\quad \mbox{and}\quad \ell_x=cx
\qquad\mbox{on $\ell_x<\infty$.}
\]
Besides, from part 1 of Lemma~\ref{equivalence1}, we have by
integrating (\ref{4594}) over $u\in[0,\infty)$,
%
%
%e22 ###
\begin{equation}\label{end}
cxp_t^+(dx) \,dt=ct\p(\tau_{cx}\in dt) \,dx,
\end{equation}
as measures on $[0,\infty)^2$. This ends the proof of the
proposition.
\end{pf*}

Note that identity (\ref{end}) may also be derived from
Corollary VII.3, page 190 in~\cite{be} or from Theorem 3 in~\cite{ac}.
The constant $c$ appearing in our expression is due to the choice of
the normalization of the local time in (\ref{norm1}).

\begin{pf*}{Proof of Corollary \protect\ref{counterexample}} We may check
that $\int_{(0,1)}x\Pi(dx)=\infty$ in both cases 1 and 2, so that $X$
has unbounded variation and
it is of type 1, from Rogozin's criterion; see~\cite{be}, page 167.

On the one hand, in part 1, since $X$ has no positive jumps, the ladder
height process $H$ is a pure drift, so it follows from Theorem
\ref{th2} and the remark thereafter that the law of $\overline{X}_t$ is
absolutely continuous for all $t>0$.
On the other hand, following~\cite{or}, we see that $-\log|\psi
(\lambda
)|$ does not tend to $+\infty$ as $|\lambda|\rightarrow\infty$,
so that from the Riemann--Lebesgue theorem, $p_t(dx)$ is not absolutely
continuous. But since $\Pi(dx)$ is discrete with infinite mass, it
follows from the Hartman--Wintner theorem (see Theorem 27.16 in \cite
{sa}) that $p_t(dx)$ is continuous singular.

Then in part 2, since $X$ is symmetric, it follows from the discussion
which comes just before Theorem~\ref{coro2} that the resolvent measure
$U(dx)$ of $X$ is not absolutely continuous, so the result follows from
Theorem~\ref{th2}.
\end{pf*}

In order to prove Theorem~\ref{new}, we need the following lemma. We
say that a sequence of random variables $S_1,\ldots,S_n,\ldots,$
with $S_0=0$ is a cyclically exchangeable chain if for any $n\ge1$, the
increments $S_1,S_2-S_1,\ldots,S_n-S_{n-1}$
are cyclically exchangeable. We emphasize that any random walk
satisfies this property.
For $x\ge0$ and $n\ge1$, we define $\overline{S}_n=\max_{k\le
n}S_k$ and
\[
\Lambda_n^x=\operatorname{Card} \{1\le k\le n:S_k=\overline{S}_k,
\overline
{S}_n-x\le\overline{S}_k\le\overline{S}_n\}.
\]
The random variable $\Lambda_n^{x}$ may be considered as a counting
measure of the times at which the chain reaches its past
maximum between the levels $\overline{S}_n-x$ and $\overline{S}_n$.
Note that $\Lambda_n^x\ge1$, a.s., whenever
$\p(S_1<0,S_2<0,\ldots,S_n<0)=0$.
\begin{lemma}\label{newlem}
Let $(S_n)_{n\ge0}$ be any cyclically exchangeable chain such that $\p
(S_1<0,S_2<0,\ldots,S_n<0)=0$,
then for any set $A\in\mathcal{ B}_{\mathbb{R}_+}$ and $n\ge1$,
\[
\e\bigl((\Lambda_n^{S_n})^{-1}\ind_{\{S_n=\overline{S}_n\in
A\}}\bigr)=\frac1n\p(S_n\in A).
\]
\end{lemma}
\begin{pf} Fix $A\in\mathcal{ B}_{\mathbb{R}_+}$, $n\ge1$, and let
$1\le k\le n$ be such that $\p(\Lambda_n^{S_n}=k)>0$.
Then note that conditionally on $\Lambda_n^{S_n}=k$ and $S_n\in A$, the
chain $S_0,S_1,\ldots,S_n$ is
cyclically exchangeable. Moreover, amongst the $n$ cyclical
permutations in the set of trajectories $\{S_0,\ldots,S_n:\Lambda
_n^{S_n}=k,S_n\in A\}$, there are exactly $k$ trajectories which satisfy
the condition $S_n=\overline{S}_n$. This proves the identity
\[
\frac1k\p(S_n=\overline{S}_n\in A | \Lambda_n^{S_n}=k)=\frac1n\p
(S_n\in A | \Lambda_n^{S_n}=k),
\]
and the result is obtained by integrating over the law of $\Lambda_n^{S_n}$.
\end{pf}

Lemma~\ref{newlem} can be compared to Corollary 1 in \cite
{acd}. The only difference is that in~\cite{acd}, only
strict records of the chain are considered, whereas in our case, the
``local time'' $\Lambda_n^{S_n}$ counts all the records
(i.e., weak records) between the levels $\overline{S}_n-S_n$ and
$\overline{S}_n$.

\begin{pf*}{Proof of Theorem \protect\ref{new}} Fix $t>0$. Equivalence
between the measures $p_t^+$, $\mu_t^+$, $U^+$ and $\upsilon^+$
simply follows from the following decompositions:
\begin{eqnarray*}
p_t^+(dx)&=&\sum_{n=0}^\infty\p(N_t=n)\p(S_n\in dx),\\
\mu_t^+(dx)&=&\sum_{n=0}^\infty\biggl(\int_0^t\p(N_s=n) \,ds\biggr)\p
(S_n\in dx),\\
U^+(dx)&=& \sum_{n=0}^\infty\biggl(\int_0^\infty e^{-s}\p(N_s=n)
\,ds\biggr)\p(S_n\in dx)
\end{eqnarray*}
and the definition of $\upsilon^+$.

It remains to prove the equivalence between $\p(\overline{X}_t\in dx)$
and $\upsilon^+(dx)$.
To this aim, note that $\overline{X}_t=\overline{S}_{N_t}$, so that
%
%
%e23 ###
\begin{equation}\label{3034}
\p(\overline{X}_t\in dx)=\sum_{n=0}^\infty\p(N_t=n)\p(\overline
{S}_n\in dx).
\end{equation}
Let $A\in\mathcal{ B}_{\mathbb{R}_+}$ be such that $\upsilon^+(A)=0$, then
by definition of $\upsilon^+$,
$\p(S_n\in A)=0$, for all $n\ge0$. This implies that
%
%
%e24 ###
\begin{equation}\label{3039}
\p(\overline{S}_n\in A)\le\sum_{k=0}^n\p(S_k\in A,S_k=\overline
{S}_k)=0,
\end{equation}
so that from (\ref{3034}), $\p(\overline{X}_t\in A)=0$.
Conversely, assume that $\p(\overline{X}_t\in A)=0$. Then from (\ref
{3034}), for any $n\ge0$,
$\p(\overline{S}_n\in A)=0$. For $n=0$, we have $S_0=\overline{S}_0=0$,
and for all $n\ge1$, from Lemma~\ref{newlem}, we have
\[
\e\bigl((\Lambda_n^{S_n})^{-1}\ind_{\{S_n=\overline{S}_n\in
A\}}\bigr)=\frac1n\p(S_n\in A)=0.
\]
We conclude that $\upsilon^+(A)=0$.
\end{pf*}

\section*{Acknowledgments}
I would like to thank Laurent Denis
who has brought the problem of the absolute continuity of the supremum
of L\'evy processes to my attention and for fruitful discussions on
this subject. I am also very grateful to Victor Rivero for some
valuable comments.

% imsref loaded by akundreckaite, 2012-02-28 08:36:54

%suskaldyti doi

\printaddresses


\begin{thebibliography}{20}
% BibTex style file: ims.bst, 2011-05-30
% Default style options (sort=0,type=number).
% Used options (sort=1,type=number).

%b1 ###
\bibitem{ac}
\begin{barticle}[mr]
\bauthor{\bsnm{Alili},~\bfnm{Larbi}\binits{L.}} \AND
  \bauthor{\bsnm{Chaumont},~\bfnm{Lo{\"{\i}}c}\binits{L.}}
(\byear{2001}).
\btitle{A new fluctuation identity for {L}\'evy processes and some
  applications}.
\bjournal{Bernoulli}
\bvolume{7}
\bpages{557--569}.
\bid{doi={10.2307/3318502}, issn={1350-7265}, mr={1836746}}
\bptok{imsref}%
\end{barticle}
\endbibitem

%b2 ###
\bibitem{acd}
\begin{barticle}[mr]
\bauthor{\bsnm{Alili},~\bfnm{L.}\binits{L.}},
  \bauthor{\bsnm{Chaumont},~\bfnm{L.}\binits{L.}} \AND
  \bauthor{\bsnm{Doney},~\bfnm{R.~A.}\binits{R.~A.}}
(\byear{2005}).
\btitle{On a fluctuation identity for random walks and {L}\'evy processes}.
\bjournal{Bull. Lond. Math. Soc.}
\bvolume{37}
\bpages{141--148}.
\bid{doi={10.1112/S0024609304003789}, issn={0024-6093}, mr={2106729}}
\bptok{imsref}%
\end{barticle}
\endbibitem

%b3 ###
\bibitem{be}
\begin{bbook}[mr]
\bauthor{\bsnm{Bertoin},~\bfnm{Jean}\binits{J.}}
(\byear{1996}).
\btitle{L\'evy Processes}.
\bseries{Cambridge Tracts in Mathematics}
\bvolume{121}.
\bpublisher{Cambridge Univ. Press}, \baddress{Cambridge}.
\bid{mr={1406564}}
\bptok{imsref}%
\end{bbook}
\endbibitem

%b4 ###
\bibitem{bi}
\begin{barticle}[mr]
\bauthor{\bsnm{Bingham},~\bfnm{N.~H.}\binits{N.~H.}}
(\byear{1973}).
\btitle{Maxima of sums of random variables and suprema of stable processes}.
\bjournal{Z. Wahrsch. Verw. Gebiete}
\bvolume{26}
\bpages{273--296}.
\bid{mr={0415780}}
\bptok{imsref}%
\end{barticle}
\endbibitem

%b5 ###
\bibitem{bd}
\begin{barticle}[mr]
\bauthor{\bsnm{Bouleau},~\bfnm{Nicolas}\binits{N.}} \AND
  \bauthor{\bsnm{Denis},~\bfnm{Laurent}\binits{L.}}
(\byear{2009}).
\btitle{Energy image density property and the lent particle method for
  {P}oisson measures}.
\bjournal{J. Funct. Anal.}
\bvolume{257}
\bpages{1144--1174}.
\bid{doi={10.1016/j.jfa.2009.03.004}, issn={0022-1236}, mr={2535466}}
\bptok{imsref}%
\end{barticle}
\endbibitem

%b6 ###
\bibitem{ma}
\begin{bmisc}[auto:STB|2012/02/27|11:18:36]
\bauthor{\bsnm{Chaumont},~\bfnm{L.}\binits{L.}} \AND
  \bauthor{\bsnm{Ma{\l}ecki},~\bfnm{J.}\binits{J.}}
(\byear{2012}).
\bhowpublished{Density of the supremum and entrance law of the reflected excursion of L\'{e}vy processes. Work in progress}.
\bptok{imsref}%
\end{bmisc}
\endbibitem

%b7 ###
\bibitem{co}
\begin{bmisc}[auto:STB|2012/02/27|11:18:36]
\bauthor{\bsnm{Cordero},~\bfnm{F.}\binits{F.}}
(\byear{2010}).
\bhowpublished{Sur la th\'eorie des excursions pour des processus de L\'evy
  sym\'etriques stables d'indice $\alpha\in\,]1,2]$, et quelques applications.
  Ph.D. thesis, Univ. Paris 6}.
\bptok{imsref}%
\end{bmisc}
\endbibitem

%b8 ###
\bibitem{do}
\begin{bbook}[mr]
\bauthor{\bsnm{Doney},~\bfnm{Ronald~A.}\binits{R.~A.}}
(\byear{2007}).
\btitle{Fluctuation Theory for {L}\'evy Processes}.
\bseries{Lecture Notes in Math.}
\bvolume{1897}.
\bpublisher{Springer}, \baddress{Berlin}.
%  Saint-Flour, July 6--23, 2005, Edited and with a foreword by Jean Picard}.
\bid{mr={2320889}}
\bptok{imsref}%
\end{bbook}
\endbibitem

%b9 ###
\bibitem{dr}
\begin{bmisc}[auto:STB|2012/02/27|11:18:36]
\bauthor{\bsnm{Doney},~\bfnm{R.~A.}\binits{R.~A.}} \AND
  \bauthor{\bsnm{Rivero},~\bfnm{V.}\binits{V.}}
  (\byear{2011}).
\bhowpublished{Asymptotic behaviour of first passage time distributions for
  L\'evy processes. Preprint. Available at arXiv:\arxivurl{1107.4415v1}}.
\bptok{imsref}%
\end{bmisc}
\endbibitem

%b10 ###
\bibitem{ds}
\begin{barticle}[mr]
\bauthor{\bsnm{Doney},~\bfnm{R.~A.}\binits{R.~A.}} \AND
  \bauthor{\bsnm{Savov},~\bfnm{M.~S.}\binits{M.~S.}}
(\byear{2010}).
\btitle{The asymptotic behavior of densities related to the supremum of a
  stable process}.
\bjournal{Ann. Probab.}
\bvolume{38}
\bpages{316--326}.
\bid{doi={10.1214/09-AOP479}, issn={0091-1798}, mr={2599201}}
\bptok{imsref}%
\end{barticle}
\endbibitem

%b11 ###
\bibitem{fu}
\begin{binproceedings}[mr]
\bauthor{\bsnm{Fukushima},~\bfnm{M.}\binits{M.}}
(\byear{1976}).
\btitle{Potential theory of symmetric {M}arkov processes and its applications}.
In \bbooktitle{Proceedings of the {T}hird {J}apan-{USSR} {S}ymposium on
  {P}robability {T}heory ({T}ashkent, 1975)}.
\bseries{Lecture Notes in Math.}
\bvolume{550}
\bpages{119--133}.
\bpublisher{Springer}, \baddress{Berlin}.
\bid{mr={0494514}}
\bptok{imsref}%
\end{binproceedings}
\endbibitem

%b12 ###
\bibitem{ka}
\begin{barticle}[mr]
\bauthor{\bsnm{Kallenberg},~\bfnm{Olav}\binits{O.}}
(\byear{1981}).
\btitle{Splitting at backward times in regenerative sets}.
\bjournal{Ann. Probab.}
\bvolume{9}
\bpages{781--799}.
\bid{issn={0091-1798}, mr={0628873}}
\bptok{imsref}%
\end{barticle}
\endbibitem

%b13 ###
\bibitem{ku2}
\begin{barticle}[mr]
\bauthor{\bsnm{Kuznetsov},~\bfnm{Alexey}\binits{A.}}
(\byear{2011}).
\btitle{On extrema of stable processes}.
\bjournal{Ann. Probab.}
\bvolume{39}
\bpages{1027--1060}.
\bid{doi={10.1214/10-AOP577}, issn={0091-1798}, mr={2789582}}
\bptok{imsref}%
\end{barticle}
\endbibitem

%b14 ###
\bibitem{ky}
\begin{bbook}[mr]
\bauthor{\bsnm{Kyprianou},~\bfnm{Andreas~E.}\binits{A.~E.}}
(\byear{2006}).
\btitle{Introductory Lectures on Fluctuations of {L}\'evy Processes with
  Applications}.
\bpublisher{Springer}, \baddress{Berlin}.
\bid{mr={2250061}}
\bptok{imsref}%
\end{bbook}
\endbibitem

%b15 ###
\bibitem{le}
\begin{bbook}[mr]
\bauthor{\bsnm{L{\'e}vy},~\bfnm{Paul}\binits{P.}}
(\byear{1965}).
\btitle{Processus Stochastiques et Mouvement Brownien}.
%  Augment\'ee}.
\bpublisher{Gauthier-Villars \& Cie}, \baddress{Paris}.
\bid{mr={0190953}}
\bptok{imsref}%
\end{bbook}
\endbibitem

%b16 ###
\bibitem{or}
\begin{barticle}[mr]
\bauthor{\bsnm{Orey},~\bfnm{Steven}\binits{S.}}
(\byear{1968}).
\btitle{On continuity properties of infinitely divisible distribution
  functions}.
\bjournal{Ann. Math. Statist.}
\bvolume{39}
\bpages{936--937}.
\bid{issn={0003-4851}, mr={0226701}}
\bptok{imsref}%
\end{barticle}
\endbibitem

%b17 ###
\bibitem{pr}
\begin{barticle}[mr]
\bauthor{\bsnm{Pe{\v{c}}erski{\u\i}},~\bfnm{E.~A.}\binits{E.~A.}} \AND
  \bauthor{\bsnm{Rogozin},~\bfnm{B.~A.}\binits{B.~A.}}
(\byear{1969}).
\btitle{The combined distributions of the random variables connected with the
  fluctuations of a process with independent increments}.
\bjournal{Teor. Verojatnost. i Primenen.}
\bvolume{14}
\bpages{431--444}.
\bid{issn={0040-361X}, mr={0260005}}
\bptok{imsref}%
\end{barticle}
\endbibitem

%b18 ###
\bibitem{sa}
\begin{bbook}[mr]
\bauthor{\bsnm{Sato},~\bfnm{Ken-iti}\binits{K.-i.}}
(\byear{1999}).
\btitle{L\'evy Processes and Infinitely Divisible Distributions}.
\bseries{Cambridge Studies in Advanced Mathematics}
\bvolume{68}.
\bpublisher{Cambridge Univ. Press}, \baddress{Cambridge}.
\bid{mr={1739520}}
\bptok{imsref}%
\end{bbook}
\endbibitem

%b19 ###
\bibitem{ya}
\begin{bmisc}[auto:STB|2012/02/27|11:18:36]
\bauthor{\bsnm{Yano},~\bfnm{Y.}\binits{Y.}}
(\byear{2010}).
\bhowpublished{A remarkable $\sigma$-finite measure unifying supremum
  penalisations for a stable L\'evy process. Preprint}.
\bptok{imsref}%
\end{bmisc}
\endbibitem

\end{thebibliography}
\end{document}